\documentclass[a4paper]{amsart}

\usepackage{amsmath, amsthm, amssymb}
\usepackage[english]{babel}
\usepackage[utf8]{inputenc}
\usepackage{mathrsfs}                     
\usepackage{bbm}                          
\usepackage[all]{xy}                      
\usepackage{graphicx}                     
\usepackage{color}
\usepackage{url}
\usepackage{import}                       
\usepackage{todonotes}                    
\usepackage{algpseudocode}                
\usepackage[style=numeric, minnames=4, maxnames=4,
            doi=false, url=false, isbn=false,
            giveninits=true,
            sortcites=true,
            backend=biber]{biblatex}      
\usepackage{enumerate}                    
\usepackage{framed}                       
\usepackage{bm}

\swapnumbers

\addbibresource{articles.bib}
\addbibresource{preprints.bib}

\theoremstyle{plain}
\newtheorem{theorem}[subsection]{Theorem}
\newtheorem{proposition}[subsection]{Proposition}
\newtheorem{lemma}[subsection]{Lemma}

\newtheorem*{theorem*}{Theorem}
\newtheorem*{proposition*}{Proposition}
\newtheorem*{lemma*}{Lemma}

\theoremstyle{definition}
\newtheorem{definition}[subsection]{Definition}
\newtheorem{remark}[subsection]{Remark}

\newtheorem{openquestion}[subsection]{Open Question}
\newtheorem*{definition*}{Definition}
\newtheorem*{remark*}{Remark}
\newtheorem*{example*}{Example}
\newtheorem*{openquestion*}{Open Question}

\def\al{\alpha}
\def\be{\beta}

\def\ep{\varepsilon}

\def\th{\theta}

\def\la{\lambda}
\def\rh{\varrho}

\def\ph{\varphi}

\def\Ga{\Gamma}

\def\inv{^{-1}}
\def\x{\times}
\def\p{\partial}

\def\R{{\mathbb R}}

\def\Imm{\operatorname{Imm}}

\let\on=\operatorname

\let\mb=\mathbb
\let\mc=\mathcal

\renewcommand{\vec}[1]{\bm{\mathrm{#1}}}

\newcommand{\ud}{\,\mathrm{d}}

\def\todomartins#1{}
\def\todojakob#1{}

\begin{document}

\title[Completeness of Length-Weighted Metrics]{Completeness of Length-Weighted Sobolev Metrics on the Space of Curves}

\author{Martins Bruveris}
\address{Department of Mathematics, Brunel University London,
  Ux\-bridge, UB8 3PH, United Kingdom}
\email{martins.bruveris@brunel.ac.uk}

\author{Jakob M{\o}ller-Andersen}
\address{Department of Mathematics, Florida State University, USA}
\email{jmoeller@math.fsu.edu}
\date{\today}

\subjclass{Primary: 58D10; Secondary: 58B20, 53A04, 35A01.}
 \keywords{Immersed curves, Sobolev metrics, completeness, minimizing geodesics, shape space.}

\begin{abstract}
In this article we prove completeness results for Sobolev metrics with nonconstant coefficients on the space of immersed curves and on the space of unparametrized curves. We provide necessary as well as sufficient conditions for the coefficients of the Riemannian metric for the metric to be metrically complete and we construct examples of incomplete metrics. This work is an extension of previous work on completeness of Sobolev metrics with constant coefficients.
\end{abstract}

\maketitle

\section{Introduction}

Comparison and analysis of geometrical shape has found applications in various fields including image analysis, biomedical imaging and computer vision \cite{Younes2010,Srivastava2016}.

We consider Sobolev metrics on the space $\on{Imm}(S^1,\R^d)$ of closed, regular (or immersed) curves in $\R^d$. Sobolev metrics are metrics of the form
\[
G_c(h,k) = \int_{S^1} a_0 \langle h, k \rangle + 
a_1 \langle D_s h, D_s k \rangle \dots +
a_n \langle D_s^n h, D_s^n k \rangle \ud s \,;
\]
here $c \in \on{Imm}(S^1,\R^d)$ is a curve and $h,k \in T_c \on{Imm}(S^1,\R^d)$ are tangent vectors; $D_sh = h'/|c'|$ and $\ud s = |c'| \ud \th$ denote differentiation and integration with respect arc length respectively. The coefficients $a_k$ can be either constants or functions depending on the curve $c$.

Sobolev metrics on spaces of curves were introduced independently in \cite{Charpiat2007,Mennucci2007,Michor2006c}. Their completeness properties have been studied in~\cite{Mennucci2008,Bruveris2014, Bruveris2015}. Sobolev metrics have been generalized to manifold-valued curves \cite{Celledoni2016, LeBrigant2016_preprint, Su2014} and to higher-dimensional immersed manifolds \cite{Bauer2011b,Bauer2012d}. Numerical discretizations are available for first order metrics~\cite{Michor2008a,Srivastava2011} as well as for second order ones~\cite{Bauer2017,Bauer2015b,Bauer2015c}. See~\cite{Bauer2014} for an overview of Riemannian metrics on spaces of curves and related spaces.

When the coefficients $a_k$ are constants, we call such a metric a Sobolev metric with constant coefficients. It was shown in \cite{Bruveris2014, Bruveris2015} that constant coefficient Sobolev metrics of order $n \geq 2$ are complete: They can be extended to the space $\mc I^n(S^1,\R^d)$ of Sobolev immersions of order $n$ and $\mc I^n(S^1,\R^d)$ equipped with the induced geodesic distance is a complete metric space. Furthermore the geodesic equation has global-in-time solutions and any two curves in the same connected component can be joined by a minimizing geodesic.

However, the results of \cite{Bruveris2014, Bruveris2015} do not cover scale-invariant Sobolev metrics. For $G$ to be scale-invariant we need to choose coefficients $a_k$ that depend on the length $\ell_c$ of the basepoint curve. The choice $a_k(\ell_c) = \ell_c^{2k-3}$ or multiples thereof result in a scale-invariant Riemannian metric $G$. This leads us to consider general length-weighted Sobolev metrics
\[
G_c(h,k) = \int_{S^1} a_0(\ell_c) \langle h, k \rangle + 
a_1(\ell_c) \langle D_s h, D_s k \rangle \dots +
a_n(\ell_c) \langle D_s^n h, D_s^n k \rangle \ud s \,,
\]
with $a_k = a_k(\ell_c)$ smooth functions of $\ell_c$. The purpose of this paper is to extend completeness results to length-weighted Sobolev metrics.

We introduce the following asymptotic conditions on the coefficients
\begin{align}
\max_{1 \leq k \leq n} \int_0^1 r^{1/2-k} \sqrt{a_k(r)} \ud r &= \infty \tag{$I_0$}\,, \\
\max_{1 \leq k \leq n} \int_1^\infty r^{1/2-k} \sqrt{a_k(r)} \ud r &= \infty \tag{$I_\infty$}\,.
\end{align}
These conditions require that at least some coefficient $a_k(\ell)$ does not decay to $0$ too quickly, both for $\ell\to 0$ and for $\ell\to \infty$. By controlling the rate of decay of the coefficients we are able to control the length of curves on metric balls with respect to the geodesic distance.

The main result of this article is that these conditions are sufficient for a Sobolev metric to be complete. We prove the following theorem
\begin{theorem}
Let $G$ be a length-weighted Sobolev metric of order $n \geq 2$, satisfying \eqref{eq:H0} and \eqref{eq:Hinfty}. Then
\begin{enumerate}
\item $(\mc I^n(S^1,\R^d),\on{dist})$ is a complete metric space.
\item $(\mc I^n(S^1,\R^d),G)$ is geodesically complete. 
\item Any two curves in the same connected component can be joined by a minimizing geodesic.
\end{enumerate}
\end{theorem}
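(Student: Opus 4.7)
The plan is to reduce the theorem to the completeness results for constant-coefficient Sobolev metrics of \cite{Bruveris2014, Bruveris2015}. The new content is a quantitative control on the length: I would prove that on any distance ball $B_R(c_0) \subset \mc I^n(S^1,\R^d)$ the length $\ell_c$ is confined to a compact subinterval $[\ell_-,\ell_+] \subset (0,\infty)$ depending only on $R$ and $c_0$. Once this is in hand, the coefficients $a_k(\ell_c)$ are bounded above and below by positive constants on $B_R(c_0)$, so $G$ is uniformly equivalent there to a constant-coefficient Sobolev metric of the same order, and each of (1)--(3) transfers from the constant-coefficient case by pulling inequalities through the comparison.

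The central estimate is as follows. For a smooth path $t \mapsto c(t)$ with velocity $h = \p_t c$, the standard first variation $\p_t \ell_c = \int_{S^1} \langle v, D_s h\rangle \ud s$ (with $v = c'/|c'|$) together with Cauchy--Schwarz gives $|\p_t \ell_c| \leq \sqrt{\ell_c}\,\|D_s h\|_{L^2}$. Since $c$ is closed, $D_s h$ has vanishing integral over $S^1$, so iterated Wirtinger/Poincar\'e on a circle of length $\ell_c$ yields $\|D_s h\|_{L^2} \leq C_k\, \ell_c^{k-1}\,\|D_s^k h\|_{L^2}$ for $1 \leq k \leq n$. Combined with $a_k(\ell_c)\|D_s^k h\|_{L^2}^2 \leq G_c(h,h)$, this produces
\[
\frac{\sqrt{a_k(\ell_c)}}{\ell_c^{k-1/2}}\, |\p_t \ell_c| \;\leq\; C_k \sqrt{G_c(h,h)}\,.
\]
Integrating in $t$ and changing variables to $r = \ell_{c(t)}$ bounds $\bigl|\int_{\ell_{c(0)}}^{\ell_{c(T)}} r^{1/2-k}\sqrt{a_k(r)}\,\ud r\bigr|$ by $C_k \cdot L_G(c)$. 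Taking the maximum over $k$ and invoking the asymptotic integrability assumptions at $0$ and $\infty$ shows that $\ell_c$ can neither collapse to $0$ nor blow up to $\infty$ along any path of bounded Riemannian length, which is the required length control.

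With the length controlled, I would argue each part along the pattern of \cite{Bruveris2015}. For (1), a distance-Cauchy sequence is confined to a ball, hence to a length interval $[\ell_-,\ell_+]$, so the equivalence with the constant-coefficient metric on that ball transfers Cauchyness and the completeness of $\mc I^n$. For (2), short-time existence of geodesics on $\mc I^n$ follows from the standard smooth Hilbert-manifold theory; a maximal geodesic of finite existence time would remain in a finite distance ball, so the length bound together with the known extension theorem prolongs it past the would-be endpoint. For (3), given a minimizing sequence of paths in a single connected component, the length and $\ell_c$ stay controlled; weak $H^n$-compactness, lower semicontinuity of $G$-energy (via the uniform comparison on the ball), and the usual argument identifying the weak limit as a critical point of energy produce a minimizing geodesic.

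The main obstacle I anticipate is the length-control estimate itself: one must identify which index $k$ supplies the divergence in the integrability conditions and apply the inequality uniformly for that $k$, since the scaling $r^{1/2-k}\sqrt{a_k(r)}$ depends critically on the order. A related subtlety is that the Wirtinger-type argument was carried out for smooth $c$ and smooth $h$, so extending it to arbitrary $\on{dist}$-short paths in $\mc I^n$ requires a density/approximation argument. Finally, the passage to the minimizing geodesic demands care in showing that the weak limit actually lies in $\mc I^n$ with $|c'|$ still bounded below, which in turn relies on the lower length bound $\ell_c \geq \ell_- > 0$ supplied by condition at $0$.
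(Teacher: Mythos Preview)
Your proposal is correct and follows essentially the same approach as the paper: the key length-control lemma is proved exactly as you outline (the paper packages it by showing that $c\mapsto W(\ell_c)$ is globally Lipschitz, where $W(r)=\sum_{k=1}^n\int_1^r \rho^{1/2-k}\sqrt{a_k(\rho)}\,d\rho$, summing over $k$ rather than selecting one), after which $G$ is shown uniformly equivalent on balls to the flat $H^n(d\theta)$-norm and parts (1)--(3) are read off from the machinery of \cite{Bruveris2015}. The only differences are cosmetic: the paper establishes a separate pointwise bound $C^{-1}\le |c'(\theta)|\le C$ via Lipschitz continuity of $\log|c'|$ (rather than delegating the immersion property of the limit to the constant-coefficient theorem), and for (2) it simply cites the standard fact that metric completeness implies geodesic completeness for strong smooth metrics on Hilbert manifolds, which is shorter than your proposed extension argument.
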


The main ingredient in the proof is Proposition~\ref{prop:EquivRieMetricFlatLenWgt}, which shows that $G_c(\cdot,\cdot)$ is equivalent to the background norm $\|\cdot\|_{H^n(d\th)}$ with constants that can be chosen uniformly on arbitrary metric balls with respect to the induced geodesic distance.

In the conditions~\eqref{eq:H0} and~\eqref{eq:Hinfty}, the maximum is taken over $1 \leq k \leq n$. In Section~\ref{sec:IncompletenessCounterEx} we show that this can not be relaxed to $0\leq k \leq n$ by constructing metrically incomplete Sobolev metrics that satisfy the corresponding versions of~\eqref{eq:H0} and~\eqref{eq:Hinfty} with $k=0$.

\section{Background material and notation}

\subsection{The space of curves}

Let $d \geq 1$. The space
\[
\on{Imm}(S^1, \R^d) = \left\{ c \in C^\infty(S^1, \R^d) \,:\, c'(\th) \neq 0 \right\}
\]
of immersions or regular, parametrized curves is an open set in the Fr\'echet space $C^\infty(S^1, \R^d)$ with respect to the $C^\infty$-topology and thus itself a smooth Fr\'echet manifold. For $n \in \mb N$ and $n \geq 2$ the space
\[
\mc I^n(S^1,\R^d) = \left\{ c \in H^n(S^1,\R^d) \,:\, c'(\th) \neq 0 \right\}
\]
of Sobolev curves of order $n$ is similarly an open subset of $H^n(S^1,\R^d)$ and hence a Hilbert manifold. Because of the Sobolev embedding theorem \cite{Adams2003}, $\mc I^n(S^1,\R^d)$ is well-defined and each curve in $\mc I^n(S^1,\R^d)$ is a $C^1$-immersion.

As open subsets of vector spaces the tangent bundles of the spaces $\on{Imm}(S^1,\R^d)$ and $\mc I^n(S^1,\R^d)$ are trivial,
\begin{align*}
T\on{Imm}(S^1,\R^d) &\cong \on{Imm}(S^1,\R^d) \x C^\infty(S^1,\R^d) \\
T\mc I^n(S^1,\R^d) &\cong \mc I^n(S^1,\R^d) \x H^n(S^1,\R^d)\,.
\end{align*}
From a geometric perspective the tangent space at a curve $c$ consists of vector fields along it, i.e., $T_c \on{Imm}(S^1,\R^d) = \Ga(c^\ast T\R^d)$. In the Sobolev case, where $c \in \mc I^n(S^1,\R^d)$, the pullback bundle $c^\ast T\R^d$ is not a $C^\infty$-manifold and the tangent space consists of fibre-preserving $H^n$-maps,
\begin{equation*}
T_c \mc I^n(S^1,\R^2) = 
\left\{h \in H^n(S^1,T\R^d): \quad \begin{aligned}\xymatrix{
& T\R^d \ar[d]^{\pi} \\
S^1 \ar[r]^c \ar[ur]^h & \R^d
} \end{aligned} \right\}\,.
\end{equation*}
See \cite{Michor1997,Hamilton1982} for details in the smooth case and \cite{Eells1966, Palais1968} for spaces of Sobolev maps.

For a curve $c \in \mc I^n(S^1,\R^d)$ or $c \in \on{Imm}(S^1,\R^d)$ we denote the parameter by $\th \in S^1$ and differentiation $\p_\th$ by $'$, i.e., $h' = \p_\th h$. Since $c$ is a $C^1$-immersion, the unit-length tangent vector $D_s c = c'/|c'|$ is well-defined. We will denote by $D_s = \p_\th / |c'|$ the derivative with respect to arc length and by $\ud s = |c'| \ud \th$ the integration with respect to arc length. To summarize, we have
\begin{align*}
D_s & = \frac{1}{|c'|} \p_\th\,, &
\ud s & = |c'| \ud \th\,.
\end{align*}
The length of $c$ is denoted by $\ell_c = \int_{S^1} 1 \ud s$.

\subsection{Sobolev norms}

For $n \geq 1$ we fix the following norm on $H^n(S^1,\R^d)$,
\begin{equation*}
\| h \|_{H^n(d\th)}^2 = \int_{S^1} |h(\th)|^2 + |\p_\th^n h(\th)|^2 \ud \th\,.
\end{equation*}
Its counterpart is the $H^n(ds)$-norm
\begin{equation*}
\| h \|_{H^n(ds)}^2 = \int_{S^1} |h(\th)|^2 + |D_s^{n}h(\th)|^2 \ud s\,,
\end{equation*}
which depends on the curve $c \in \mc I^n(S^1,\R^d)$. The norms $H^n(d\th)$ and $H^n(ds)$ are equivalent, but the constant in the inequalities
\[
C\inv \| h \|_{H^n(d\th)} \leq  \| h \|_{H^n(ds)} \leq C  \| h \|_{H^n(d\th)}
\]
depends on $c$. The nature of this dependence is the content of Proposition~\ref{prop:HndThetaHndSEquiv} and Proposition~\ref{prop:EquivRieMetricFlatLenWgt}.

The $L^2(d\th)$- and $L^2(ds)$-norms are defined similarly,
\begin{align*}
\| u \|^2_{L^2(d\th)} &= \int_{S^1} |u|^2 \ud \th\,, &
\| u \|^2_{L^2(ds)} = \int_{S^1} |u|^2 \ud s\,,
\end{align*}
and they are related via $\left\| u \sqrt{|c'|} \right\|_{L^2(d\th)} = \| u \|_{L^2(ds)}$.

\section{Length-weighted metrics}

\begin{definition}
A \textit{length-weighted Sobolev metric} of order $n$ is a Riemannian metric on $\Imm(S^1,\R^d)$ of the form
\begin{equation}
\label{eq:LengthWgtSobolevMetric}
G_c(h,k) = \sum_{k=0}^n \int_{S^1} a_k(\ell_c) \langle D_s^k h, D_s^k k \rangle \ud s,
\end{equation}
where the coefficients are smooth functions $a_k \in C^\infty(\R_{>0},\R_{\geq 0})$ and $a_0(\ell)>0$ and $a_n(\ell) > 0$ for all $\ell >0$.
\end{definition}

Note that $a_0(\ell),a_n(\ell)>0$, but the coefficients can approach $0$ as $\ell \to 0$ or $\ell \to \infty$. We obtain scale-invariant metrics by choosing $a_k(\ell_c) = b_k \ell_c^{2j-3}$ with $b_k \in \R$. 

\begin{lemma}
\label{lem:smoothMetric}
Let $G$ be a length-weighted Sobolev metric of order $n \geq 0$. Then $G$ is a smooth Riemannian metric on $\on{Imm}(S^1,\R^d)$ and $G$ can be extended to a smooth Riemannian metric on the Sobolev completion $\mc I^k(S^1,\R^d)$ for $k \geq \max(n, 2)$.
\end{lemma}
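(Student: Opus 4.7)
The plan is to verify bilinearity and symmetry in $(h,k)$, positive definiteness, and smoothness of $c \mapsto G_c$. Bilinearity and symmetry in the tangent vector arguments are immediate from formula \eqref{eq:LengthWgtSobolevMetric}. For positive definiteness, since $a_0(\ell_c) > 0$ we have $G_c(h,h) \geq a_0(\ell_c)\,\|h\|^2_{L^2(ds)}$, which vanishes only for $h = 0$. The real content is therefore the smoothness of $c \mapsto G_c$, both on $\Imm(S^1,\R^d)$ and on the Sobolev completion $\mc I^k(S^1,\R^d)$ with $k \geq \max(n,2)$.

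The strategy is to decompose $G$ into smooth building blocks: (i) $c \mapsto \ell_c = \int_{S^1}|c'|\,\ud\th$; (ii) the compositions $c \mapsto a_j(\ell_c)$ for each $0 \leq j \leq n$; (iii) the bilinear form $(h,k) \mapsto \int_{S^1}\langle D_s^j h, D_s^j k\rangle\,\ud s$ as a function of $c$. For (i) I would use that $c' \mapsto |c'| = \sqrt{\langle c',c'\rangle}$ is smooth on the open subset of nowhere-vanishing maps in $H^{k-1}(S^1,\R^d)$ and that integration is a bounded linear functional. Part (ii) then follows by the chain rule from the smoothness of the $a_j$.

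The key technical ingredient for (iii) is that for $c \in \mc I^k(S^1,\R^d)$ with $k \geq 2$, the speed $|c'|$ is a strictly positive element of $H^{k-1}(S^1)$; since this space is a Banach algebra continuously embedded in $C^0$, the inversion $|c'| \mapsto |c'|^{-1}$ is smooth. Expanding $D_s^j = (|c'|^{-1}\p_\th)^j$ via the Leibniz rule produces a finite sum whose terms are products of a derivative $\p_\th^i h$ with $i \leq j$ and a polynomial in the $\p_\th^m c$ for $m \leq j$ divided by a power of $|c'|$. The principal obstacle is the bookkeeping required to check that, under the hypothesis $k \geq \max(n,2)$, all $c$-dependent coefficients lie in Banach algebras embedding into $C^0$, depend smoothly on $c$, and multiplication keeps $D_s^j h$ in $H^{n-j}$. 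Once this is done, the integral pairs smoothly against $\ud s = |c'|\,\ud\th$, and smoothness on $\Imm(S^1,\R^d)$ follows either by restriction or directly, since all operations preserve $C^\infty$.
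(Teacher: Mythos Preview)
Your proposal is correct and follows essentially the same decomposition as the paper: smoothness of $c \mapsto \ell_c$, then of $c \mapsto a_j(\ell_c)$, then of the arc-length differentiation operators. The only difference is packaging: the paper dispatches part (iii) in one line by citing \cite[Lemma~3.3]{Bruveris2014} for the smoothness of $(c,h) \mapsto D_s h$ as a map $\mc I^k \times H^j \to H^{j-1}$, whereas you sketch that result directly via the Banach algebra structure of $H^{k-1}(S^1)$ and the Leibniz expansion of $D_s^j$.
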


\begin{proof}
This follows from the smoothness of the maps
\[ D_s : \mc I^n(S^1,\R^d) \x H^{k}(S^1,\R^d) \to H^{k-1}(S^1,\R^d)\,,\quad
(c,h) \mapsto D_s h \,,
\]
for $n \geq 2$ and $1 \leq k \leq n$; see \cite[Lemma~3.3]{Bruveris2014}. Similarly, the map
\[
\mc I^n(S^1,\R^d) \to \R\,,\quad c \mapsto \ell_c\,,
\]
is smooth for $n \geq 2$.
\end{proof}

\subsection{Necessary conditions for completeness}
We are interested in necessary and sufficient conditions on the coefficient functions $a_k$, that would imply metric completeness of the corresponding metric $G$. A necessary condition is that it is neither possible to shrink a curve to a point nor to make it infinitely large along a path of finite length.
Fix $c_0 \in \mc I^n(S^1,\R^d)$ and consider the path $c(t,\th) = \rh(t)c_0(\th)$ with $\rh(0)=1$, $\rh(1) = R$ and $\rh_t(t) > 0$. We want to study the length of the path as $R \nearrow \infty$. The length is
\[
\on{Len}(c) = \int_0^1 \sqrt{G_{\rh c_0}(\rh_t c_0, \rh_t c_0) } \ud t
= \int_1^R \sqrt{G_{r c_0}(c_0, c_0)} \ud r \,. 
\]
Writing $D_c$ for $D_s$ to emphasize the dependence of the operator $D_s$ on the curve,
\[
G_{r c_0}(c_0,c_0) = \int_{S^1} \sum_{k=0}^n a_k(r\ell_{c_0}) 
\left| D_{r c_0}^k c_0 \right|^2 |r c_0'| \ud \th\,.
\]
Assume w.l.o.g. $\ell_{c_0}=1$. Then, since $D_{\rh c_0}^k c_0 = \rh^{-k} D_{c_0}^k c_0$,
\[
G_{rc_0}(c_0, c_0) = \sum_{k=0}^n a_k(r) r^{1-2k} 
\int_{S^1} \left| D_{c_0}^k c_0\right|^2 |c_0'| \ud \th\,.
\]
Note that all integrals in the above sum are strictly positive: $D^{k}_s c_0 \equiv 0$ for some $k > 1$ implies $D^{k-1}_s c_0 \equiv \la$ with $\la \in \R^d$ and $\int_{S^1} D^{k-1}_s c_0 \ud s = 0 $ forces $\la = 0$. Finally $D_s c_0 \equiv 0$ contradicts $c_0$ being an immersion. It follows, that
\begin{align*}
\lim_{R\to \infty} \on{Len}(c) = \infty & \Leftrightarrow
\int_1^\infty \left( \sum_{k=0}^n a_k(r) r^{1-2k} \right)^{1/2} \ud r = \infty \\
& \Leftrightarrow
\sum_{k=0}^n \int_1^\infty r^{1/2-k} \sqrt{a_k(r)} \ud r = \infty \\
& \Leftrightarrow
\int_1^\infty r^{1/2-k} \sqrt{a_k(r)} \ud r = \infty
\text{ for some } 0 \leq k \leq n\,.
\end{align*}
The equivalences are just restatements of the fact that the norms $\| \cdot \|_2, \| \cdot \|_1$ and $\| \cdot \|_\infty$ on $\R^{n+1}$ are equivalent. Thus a necessary condition for completeness is that at least one of the integrals
\[
I_{\infty,k} = \int_1^\infty r^{1/2-k} \sqrt{a_k(r)} \ud r
\]
with $0 \leq k \leq n$ diverges.

Similarly, one can consider the shrinking of a curve to a point by setting $\rh(0) = 1$, $\rh(1) = R > 0$ and $\rh_t(t) < 0$. Then, by a similar argument as above,
\begin{align*}
\lim_{R\to 0} \on{Len}(c) = \infty 
& \Leftrightarrow
\int_0^1 r^{1/2-k} \sqrt{a_k(r)} \ud r = \infty
\text{ for some } 0 \leq k \leq n\,.
\end{align*}
Thus the second necessary condition is the divergence of at least one of the integrals
\[
I_{0,k} = \int_0^1 r^{1/2-k} \sqrt{a_k(r)} \ud r\,,
\]
with $0 \leq k \leq n$. 

\subsection{Sufficient conditions for completeness} The main result of the paper is that for $k \geq 1$ these two conditions are also sufficient for the metric to be complete. We define for length-weighted Sobolev metrics of order $n$  the two properties
\begin{align}
\label{eq:H0}
\max_{1 \leq k \leq n} I_{0,k} &= \infty \tag{$I_0$}\,, \\
\label{eq:Hinfty}
\max_{1 \leq k \leq n} I_{\infty,k} &= \infty \tag{$I_\infty$}\,.
\end{align}
These are sufficient conditions to prevent shrinkage to a point and blow up to infinity of curves along radial paths $c(t,\th) = \rh(t)c_0(\th)$ with finite lengths. We will show that they also prevent finite time shrinkage and blow up along arbitrary paths.

\begin{remark} Note that in~\eqref{eq:H0} and~\eqref{eq:Hinfty} we require $1 \leq k \leq n$. The case when only $I_{0,\infty} = \infty$ or $I_{0,0} = \infty$ is more subtle and in Section \ref{sec:IncompletenessCounterEx} we construct metrics that satisfy $I_{0,0} = \infty$, $I_{0,\infty} = \infty$ but $I_{0,k} < \infty$, $I_{\infty,k} < \infty$ for $1 \leq k \leq n$ and which are not metrically complete. 
\end{remark}

\section{Controlling length and completeness}
In this section we prove that the length $\ell_c$ and the local arc length $|c'(\th)|$ are bounded on geodesic balls. This will constitute the main ingredients for the proof of metric completeness in Theorem~\ref{thm:main}. First we need Poincar\'e-type inequalities for the $L^2(ds)$-norm. Proofs can be found in \cite[Lemma~2.14, 2.15]{Bruveris2014}.
\begin{lemma}
\label{lemma:PoincareIneq}
If $c \in \mc I^2(S^1,\R^d)$ and $h \in H^2(S^1,\R^d)$ then,
\begin{enumerate}
\item 
$\| D_s h\|_{\infty}^2 \leq \dfrac{\ell_c}{4} \| D_s^2 h \|_{L^2(ds)}^2$
\item 
\label{Poincare3} $\| D_s h\|_{L^2(ds)}^2 \leq \dfrac{\ell_c^2}{4} \| D_s^2 h \|_{L^2(ds)}^2$
\end{enumerate}
If $c \in \mc I^n(S^1,\R^d)$ and $h \in H^n(S^1,\R^d)$ then for $0 \leq k \leq n$,
\begin{enumerate}
\setcounter{enumi}{2}
\item 
\label{Poincare4}
$\|D_s^k\|_{L^2(ds)}^2 \leq \| h \|^2_{L^2(ds)} + \| D_s^n h \|^2_{L^2(ds)}$ 
\end{enumerate}
\end{lemma}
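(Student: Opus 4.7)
The plan is to handle the three parts separately. Parts (1) and (2) are Poincar\'e-type statements exploiting that $D_s h$ has zero mean on the arc-length circle, while (3) is a straightforward interpolation between $L^2$ and the top-order derivative. The main technical point is obtaining the sharp constant $\ell_c/4$ in (1); once (1) is in hand, (2) follows by integrating and (3) follows from Fourier or iteration.

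For (1), first I would note that
\[
\int_{S^1} D_s h \ud s = \int_{S^1} h'(\th) \ud \th = 0\,,
\]
so $D_s h$ is a zero-mean periodic function on the arc-length circle $\R/\ell_c\mb Z$. Since it has zero mean, at least one zero exists, and I would pick a point $\th_0$ where $|D_s h|$ attains its maximum and let $\th_0^-,\th_0^+$ be the nearest zeros of $D_s h$ on either side, with arc-length distances $\ell^\pm$ satisfying $\ell^- + \ell^+ \leq \ell_c$. Writing $D_s h(\th_0) = \int_{\th_0^-}^{\th_0} D_s^2 h \ud s = -\int_{\th_0}^{\th_0^+} D_s^2 h \ud s$ and applying Cauchy--Schwarz to each integral gives
\[
|D_s h(\th_0)|^2 \leq \ell^{\pm} \int_{I^\pm} |D_s^2 h|^2 \ud s\,.
\]
Taking the harmonic-mean combination $\bigl(\tfrac{1}{\ell^-}+\tfrac{1}{\ell^+}\bigr) |D_s h(\th_0)|^2 \leq \|D_s^2 h\|_{L^2(ds)}^2$, and invoking the AM--HM inequality $\tfrac{\ell^-\ell^+}{\ell^-+\ell^+} \leq \tfrac{\ell^-+\ell^+}{4} \leq \tfrac{\ell_c}{4}$ gives the stated bound.

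Part (2) is an immediate consequence of (1): since $\|D_s h\|_{L^2(ds)}^2 \leq \ell_c \|D_s h\|_\infty^2$, the bound from (1) produces the factor $\ell_c^2/4$. For (3) I would pass to Fourier series with respect to the arc-length coordinate, noting that the frequencies are $\xi \in \tfrac{2\pi}{\ell_c}\mb Z$ and that $D_s$ acts by multiplication by $i\xi$. The elementary pointwise bound $|\xi|^{2k} \leq 1 + |\xi|^{2n}$, valid for every real $|\xi|\geq 0$ whenever $0 \leq k \leq n$ (split into $|\xi|\leq 1$ and $|\xi|\geq 1$), gives the inequality term by term in the Fourier sum and hence in $L^2(ds)$.

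The main obstacle is the sharp constant $\ell_c/4$ in (1); the naive one-sided Cauchy--Schwarz estimate only yields $\ell_c/2$, so the weighted harmonic-mean trick combining both sides of the maximum is essential. Everything else is routine and follows standard references such as \cite[Lemma~2.14, 2.15]{Bruveris2014}.
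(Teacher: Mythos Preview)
The paper itself gives no proof of this lemma; it simply cites \cite[Lemma~2.14, 2.15]{Bruveris2014}. So there is nothing to compare your argument \emph{against}, only to check for correctness.

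Your arguments for (2) and (3) are fine. Part (2) follows from (1) exactly as you say, and for (3) the reparametrization to arc length turns $D_s$ into the ordinary derivative on $\R/\ell_c\mb Z$, after which the Fourier estimate $|\xi|^{2k} \leq 1 + |\xi|^{2n}$ does the job.

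There is, however, a genuine gap in your proof of (1) when $d \geq 2$. You claim that since $D_s h$ has zero mean it must vanish somewhere, and then choose zeros $\th_0^\pm$ on either side of the maximum. This is the intermediate value theorem and it applies only to \emph{scalar} functions: for $d \geq 2$ a periodic vector-valued function with zero mean need not have any zero (e.g.\ $(\cos\th,\sin\th)$). So the points $\th_0^\pm$ may not exist, and the rest of the argument collapses.

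The fix is short. Let $\th_0$ be a point where $|D_s h|$ is maximal and set $v = D_s h(\th_0)/|D_s h(\th_0)|$. The scalar function $f(\th) = \langle D_s h(\th), v\rangle$ has zero mean, satisfies $f(\th_0) = \|D_s h\|_\infty$, and $|D_s f| \leq |D_s^2 h|$ pointwise. Your two-sided Cauchy--Schwarz and harmonic-mean argument now applies verbatim to $f$ and yields
\[
\|D_s h\|_\infty^2 = f(\th_0)^2 \leq \frac{\ell_c}{4} \|D_s f\|_{L^2(ds)}^2 \leq \frac{\ell_c}{4} \|D_s^2 h\|_{L^2(ds)}^2\,,
\]
which is the desired bound with the sharp constant.
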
 

To show that quantities like the length $\ell_c$ or the local arc length $|c'(\th)|$ are bounded we will use the following result, whose proof can be found in~\cite[Lemma~3.2]{Bruveris2015}.

\begin{lemma}
\label{lemma:TangentCondLipschitz}
Let $(M,g)$ be a Riemannian manifold, possibly infinite-dimensional, and $f: M \to F$ be a $C^1$-function into a normed space $F$. Assume that for each each metric ball $B(y,r)$ in $M$ there exists a constant $C$, such that 
\begin{equation}
\| T_x f.v \|_F \leq C(1 + \| f(x) \|_F ) \|v\|_x
\label{eq:TangentCondLipschitz}
\end{equation}
holds for all $x \in B(y,r)$ and $v \in T_x M$. Then the function
\[
f:(M,g) \to (F, \|\cdot \|_F)
\]
is continuous and and Lipschitz continuous on every metric ball. In particular $f$ is bounded on every metric ball. If the constant $C$ is chosen such that \eqref{eq:TangentCondLipschitz} holds globally for $x \in M$, then $f$ is globally Lipschitz continuous.
\end{lemma}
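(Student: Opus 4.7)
The plan is to differentiate $f$ along $C^1$ curves in $M$ and apply a Gronwall-type argument. Given a $C^1$ curve $\gamma:[0,1]\to M$ contained in a ball on which \eqref{eq:TangentCondLipschitz} holds with constant $C$, the chain rule and the reverse triangle inequality give
\[
|u'(t)| \leq \|(f\circ\gamma)'(t)\|_F = \|T_{\gamma(t)}f\cdot\dot\gamma(t)\|_F \leq C\bigl(1+u(t)\bigr)\|\dot\gamma(t)\|_{\gamma(t)},
\]
where $u(t)=\|f(\gamma(t))\|_F$. Setting $v=1+u$ and integrating the resulting logarithmic derivative bound yields $v(t)\leq v(0)\exp(C L_t)$, with $L_t$ the length of $\gamma|_{[0,t]}$.

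To extract boundedness on a ball $B(y,r)$, I would first apply the hypothesis on a slightly larger ball $B(y,R)$ with $R>3r$ to obtain a constant $C$. Any point $x\in B(y,r)$ is reached by a curve from $y$ of length less than $R$ that automatically stays in $B(y,R)$ because $\gamma(s)\in B(y,L_s)$. Gronwall then bounds $\|f(x)\|_F$ in terms of $\|f(y)\|_F$, $C$ and $R$, giving a finite $M$ with $\|f\|_F\leq M$ on $B(y,r)$ and in fact on all of $B(y,R/2)$. For the Lipschitz estimate, take $x_1,x_2\in B(y,r)$ and, for $\ep>0$, a connecting curve $\gamma$ of length at most $d(x_1,x_2)+\ep<2r+\ep$; by the same reasoning $\gamma$ stays inside $B(y,R)$. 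Plugging the uniform bound $M$ back into the tangent estimate,
\[
\|f(x_1)-f(x_2)\|_F \leq \int_0^1 C\bigl(1+\|f(\gamma(t))\|_F\bigr)\|\dot\gamma(t)\|\,\ud t \leq C(1+M)\on{len}(\gamma),
\]
and taking the infimum over $\gamma$ and sending $\ep\to 0$ yields $\|f(x_1)-f(x_2)\|_F\leq C(1+M)\,d(x_1,x_2)$. Continuity of $f$ is an immediate consequence, and boundedness on $B(y,r)$ follows from Lipschitz continuity together with finiteness of $\|f(y)\|_F$. The global Lipschitz statement is obtained by repeating the same calculation with a global $C$, in which case the auxiliary enlargement $B(y,R)$ is no longer needed.

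The main obstacle is the bookkeeping in the Lipschitz step: the hypothesis only guarantees a constant $C$ on each individual ball, so one has to ensure that near-minimizing paths between two points of $B(y,r)$ remain within a fixed larger ball on which both $C$ and the $L^\infty$-bound $M$ for $\|f\|_F$ are simultaneously available. Passing to an enlargement such as $B(y,3r)$ circumvents this, at the cost of slightly weakening the resulting Lipschitz constant, but since the lemma is qualitative (``Lipschitz on every metric ball'') this loss is harmless.
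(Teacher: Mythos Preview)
The paper does not actually prove this lemma; it is quoted verbatim from \cite[Lemma~3.2]{Bruveris2015} and the reader is referred there for the argument. Your Gronwall approach is the standard one and is almost certainly what the cited reference does: differentiate $t\mapsto 1+\|f(\gamma(t))\|_F$ along near-minimizing curves, integrate the logarithmic derivative to get an exponential bound, and then feed that bound back into the tangent estimate to obtain the Lipschitz inequality.

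Two small remarks on your write-up. First, your Gronwall step in fact bounds $\|f\|_F$ on the whole open ball $B(y,R)$, not merely on $B(y,R/2)$: for any $x$ with $d(y,x)<R$ there is a connecting curve of length $<R$ that stays inside $B(y,R)$, so the choice $R>3r$ already suffices for the Lipschitz step without further adjustment. Second, the final sentence of the lemma (global Lipschitz continuity when $C$ is global) is a little loose as stated, and your treatment of it is correspondingly vague: with $M=\R$, $F=\R$, $f(x)=e^x$ one has $|f'(x)|\leq(1+|f(x)|)$ globally, yet $f$ is not globally Lipschitz. In the paper's applications (e.g.\ Lemma~\ref{lemma:LengthBound}) the stronger estimate $\|T_xf.v\|_F\leq C\|v\|_x$ without the factor $(1+\|f(x)\|_F)$ is what is actually established, and that of course does give global Lipschitz continuity; so the imprecision is harmless in context.
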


First we show that length $\ell_c$ is bounded on metric balls.

\begin{lemma}
\label{lemma:LengthBound}
Let $G$ be a length-weighted Sobolev metric of order $n \geq 2$ satisfying~\eqref{eq:H0} and \eqref{eq:Hinfty}. Given $c_0 \in \mc I^n(S^1,\R^d)$ and $R > 0$ there exists a constant $C=C(c_0, R)$ such that
\[
C^{-1} \leq \ell_c \leq C\,,
\]
holds for all $c \in \mc I^n(S^1,\R^d)$ with $\on{dist}(c_0, c) < R$.
\end{lemma}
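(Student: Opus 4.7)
The plan is to invoke Lemma~\ref{lemma:TangentCondLipschitz} applied to two suitably chosen scalar functions of $c$ that depend only on $\ell_c$, diverge respectively as $\ell_c\to 0^+$ and $\ell_c\to\infty$, but satisfy a \emph{global} Lipschitz bound with respect to $\on{dist}$. Boundedness of these functions on the metric ball $B(c_0,R)$ will then force $\ell_c$ to stay in a compact subinterval of $(0,\infty)$.

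First I estimate the tangent map of $c\mapsto \ell_c$. Differentiating $\ell_c=\int_{S^1}|c'|\ud\th$ along a curve with $\p_t c=h$ gives
\[
\p_t \ell_c = \int_{S^1}\langle D_s c,D_s h\rangle\,\ud s,
\]
and Cauchy--Schwarz together with $|D_s c|=1$ yields $|\p_t \ell_c|\leq \sqrt{\ell_c}\,\|D_s h\|_{L^2(ds)}$. Iterating Lemma~\ref{lemma:PoincareIneq}(2) --- legitimate since $D_s^j h$ has zero mean on $S^1$ for every $j\geq 1$ --- produces $\|D_s h\|_{L^2(ds)}\leq (\ell_c/2)^{k-1}\|D_s^k h\|_{L^2(ds)}$ for each $1\leq k\leq n$, and combining with the metric bound $a_k(\ell_c)\|D_s^k h\|_{L^2(ds)}^2\leq G_c(h,h)$ gives
\[
|\p_t \ell_c|\leq 2^{1-k}\,\frac{\ell_c^{k-1/2}}{\sqrt{a_k(\ell_c)}}\,\sqrt{G_c(h,h)},\qquad 1\leq k\leq n.
\]

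Next, using \eqref{eq:H0} I pick $k_0\in\{1,\ldots,n\}$ with $I_{0,k_0}=\infty$, and using \eqref{eq:Hinfty} I pick $k_\infty\in\{1,\ldots,n\}$ with $I_{\infty,k_\infty}=\infty$. Define
\[
F_j(\ell)=\int_1^\ell r^{1/2-j}\sqrt{a_j(r)}\,\ud r,\qquad j\in\{k_0,k_\infty\}.
\]
Then $F_j'(\ell)=\ell^{1/2-j}\sqrt{a_j(\ell)}$ is tailored exactly so that the tangent bound collapses to $|\p_t F_j(\ell_c)|=F_j'(\ell_c)|\p_t \ell_c|\leq 2^{1-j}\sqrt{G_c(h,h)}$. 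Since $F_j\circ\ell:\mc I^n(S^1,\R^d)\to\R$ is smooth and this is a global tangent bound of the form hypothesized in Lemma~\ref{lemma:TangentCondLipschitz}, the map $c\mapsto F_j(\ell_c)$ is globally Lipschitz on $(\mc I^n(S^1,\R^d),\on{dist})$, hence bounded on $B(c_0,R)$. By the choice of $k_0$ one has $F_{k_0}(\ell)\to-\infty$ as $\ell\to 0^+$, and by the choice of $k_\infty$ one has $F_{k_\infty}(\ell)\to+\infty$ as $\ell\to\infty$; boundedness of the respective $F_j(\ell_c)$ on the ball therefore yields both $\ell_c\geq C^{-1}$ and $\ell_c\leq C$.

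The key creative step is the choice of the antiderivative $F_j$: the weight $r^{1/2-k}\sqrt{a_k(r)}$ appearing in the conditions $I_{0,k},I_{\infty,k}$ is exactly the one that makes $F_j'(\ell)$ cancel the factor $\ell^{k-1/2}/\sqrt{a_k(\ell)}$ produced by combining Cauchy--Schwarz, the iterated Poincaré inequality, and the metric lower bound. Once this is observed, verifying the hypothesis of the Lipschitz lemma is purely algebraic. A minor additional subtlety is that $k_0$ and $k_\infty$ need not coincide, so two separate functions $F_{k_0}$ and $F_{k_\infty}$ are required to pin down the two ends of the range of $\ell_c$.
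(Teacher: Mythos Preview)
Your proof is correct and follows essentially the same approach as the paper: both bound $|\p_t\ell_c|\le 2^{1-k}\ell_c^{k-1/2}\|D_s^kh\|_{L^2(ds)}$ via Cauchy--Schwarz and the iterated Poincar\'e inequality, then compose $\ell_c$ with an antiderivative of $r^{1/2-k}\sqrt{a_k(r)}$ to obtain a globally Lipschitz function to which Lemma~\ref{lemma:TangentCondLipschitz} applies. The only cosmetic difference is that the paper combines all indices into a single $W(r)=\sum_{k=1}^n\int_1^r \rho^{1/2-k}\sqrt{a_k(\rho)}\,d\rho$, which is a diffeomorphism of $(0,\infty)$ onto $\R$ (since $a_n>0$ forces $W'>0$), whereas you handle the two endpoints with two separate functions $F_{k_0}$ and $F_{k_\infty}$; both variants give the conclusion equally well.
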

\begin{proof}
The derivative of $\ell_c$ at $c$ in direction $h$ is
\[
D_{c,h} \ell_c = \int_{S^1} \langle D_s h, D_s c \rangle \ud s\,.
\]
Let $1 \leq k \leq n$. We can estimate $D_{c,h} \ell_c$ using Cauchy--Schwartz and Lemma~\ref{lemma:PoincareIneq}~\eqref{Poincare3},
\begin{align*}
|D_{c,h} \ell_c| &\leq \int_{S^1} \left| \langle D_s h, D_sc \rangle \right| |c'| \ud \th
\leq \sqrt{\int_{S^1} |c'| \ud \th} 
\sqrt{ \int_{S^1} \left| \langle D_s h, D_s c \rangle \right|^2 |c'| \ud \th} \\
&\leq \ell_c^{1/2} \| D_s h \|_{L^2(ds)}
\leq 2^{1-k}\ell_c^{k-1/2} \| D_s^k h \|_{L^2(ds)}\,.
\end{align*}
Now define the function
\[
W(r) = \sum_{k=1}^n \int_1^r \rh^{1/2-k} \sqrt{a_k(\rh)} \ud \rh \,.
\]
The assumptions $a_k(\rh) \geq 0$ and $a_n(\rh) > 0$ ensure that $W'(r)> 0$, and \eqref{eq:H0} and \eqref{eq:Hinfty} imply that $\lim_{t \to 0} W(t) = -\infty$ and $\lim_{t \to \infty} W(t) = \infty$. Hence $W:(0,\infty) \to \R$ is a diffeomorphism. We can estimate the derivative $D_{c,h} W(\ell_c)$ via 
\begin{align*}
|D_{c,h} W(\ell_c)| &\leq \sum_{k=1}^n \ell_c^{1/2-k} \sqrt{a_k(\ell_c)}\, 
|D_{c,h} \ell_c| \\
&\leq \sum_{k=1}^n 2^{1-k} \sqrt{a_k(\ell_c})\,  \| D_s^k h \|_{L^2(ds)}
\leq C \sqrt{G_c(h,h)}\,,
\end{align*}
for some constant $C$.
Applying Lemma~\ref{lemma:TangentCondLipschitz} we see that $c \mapsto W \circ \ell_c$ is globally Lipschitz continuous and in particular bounded on every metric ball. Because $W$ is a diffeomorphism, $\ell_c$ itself is also bounded above and away from $0$ on every metric ball.
\end{proof}
We will also need to show that $\log |c'|$ is Lipschitz continuous on metric balls with respect to the geodesic distance.
\begin{lemma}
\label{lemma:logNormCprimeLipCont}
Let $G$ be a length-weighted Sobolev metric of order $n \geq 2$ satisfying~\eqref{eq:H0} and \eqref{eq:Hinfty}. The the function
\begin{align*}
\log |c'| : \left( \mc I^n(S^1,\R^d), \on{dist} \right) \to L^\infty(S^1,\R) \, .
\end{align*}
is continuous and Lipschitz continuous on every metric ball $B(c_0,R)$.

Therefore, there exists a constant $C = C(c_0,R)$ such that all $c \in \mc I^n(S^1,\R^d)$ with $\on{dist}(c_0,c) < R$ and all $\theta \in S^1$ satisfy
\[
C^{-1} \leq |c'(\theta)| \leq C \,.
\]
\end{lemma}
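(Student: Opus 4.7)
The plan is to apply Lemma~\ref{lemma:TangentCondLipschitz} to the map $f(c)=\log|c'|$ from $(\mc I^n(S^1,\R^d),G)$ into $L^\infty(S^1,\R)$. A direct differentiation, using $c' = |c'|D_sc$ and $h' = |c'|D_sh$, gives
\[
D_{c,h}\log|c'|(\th) = \frac{\langle c'(\th),h'(\th)\rangle}{|c'(\th)|^2} = \langle D_sc(\th), D_sh(\th)\rangle\,,
\]
so $\|D_{c,h}\log|c'|\|_\infty \leq \|D_sh\|_\infty$ since $|D_sc|=1$. The task thus reduces to bounding $\|D_sh\|_\infty$ by $\sqrt{G_c(h,h)}$ with a constant that is uniform on any given metric ball $B(c_0,R)$.

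For this, combine Lemma~\ref{lemma:PoincareIneq}(1) with item~\eqref{Poincare4} of the same lemma, using $n \geq 2$, to obtain
\[
\|D_sh\|_\infty^2 \leq \tfrac{\ell_c}{4}\|D_s^2 h\|_{L^2(ds)}^2 \leq \tfrac{\ell_c}{4}\bigl(\|h\|_{L^2(ds)}^2 + \|D_s^n h\|_{L^2(ds)}^2\bigr)\,.
\]
Discarding all but the $k=0$ and $k=n$ terms in the sum defining $G_c(h,h)$ yields
\[
\|h\|_{L^2(ds)}^2 + \|D_s^n h\|_{L^2(ds)}^2 \leq \frac{G_c(h,h)}{\min(a_0(\ell_c), a_n(\ell_c))}\,.
\]
By Lemma~\ref{lemma:LengthBound}, whose hypotheses are in force, $\ell_c$ stays inside a compact subinterval of $(0,\infty)$ on $B(c_0,R)$; since $a_0$ and $a_n$ are continuous and strictly positive on $\R_{>0}$, both $\ell_c$ and $\min(a_0(\ell_c),a_n(\ell_c))\inv$ are bounded above on the ball. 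Chaining the three estimates produces a constant $C=C(c_0,R)$ with $\|D_sh\|_\infty \leq C\sqrt{G_c(h,h)}$ for all $c \in B(c_0,R)$ and all $h \in T_c\mc I^n(S^1,\R^d)$.

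The hypothesis of Lemma~\ref{lemma:TangentCondLipschitz} is thus satisfied (in fact in the stronger form without the factor $1+\|f(c)\|_F$), so $c \mapsto \log|c'|$ is continuous and Lipschitz continuous on every metric ball. The pointwise two-sided bound on $|c'(\th)|$ then follows by exponentiating $\|\log|c'|-\log|c_0'|\|_\infty \leq L\cdot \on{dist}(c,c_0) \leq LR$ and using that $\log|c_0'|$ is itself bounded in $L^\infty$. The principal subtlety, as in Lemma~\ref{lemma:LengthBound}, is that the length-weighted coefficients $a_0(\ell_c),a_n(\ell_c)$ and the Poincar\'e prefactor $\ell_c/4$ can all degenerate as $\ell_c\to 0$ or $\ell_c\to\infty$; the previously established length bound is precisely what confines these quantities to compact ranges where uniform estimates are available.
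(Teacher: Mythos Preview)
Your proof is correct and follows essentially the same route as the paper: compute $D_{c,h}\log|c'|=\langle D_sc,D_sh\rangle$, bound $\|D_sh\|_\infty$ via the Poincar\'e inequalities of Lemma~\ref{lemma:PoincareIneq}, invoke Lemma~\ref{lemma:LengthBound} to confine $\ell_c$ and hence the coefficients to a compact range, and finish with Lemma~\ref{lemma:TangentCondLipschitz}. The only cosmetic difference is that the paper iterates Lemma~\ref{lemma:PoincareIneq}\,(2) to reach $\|D_s^n h\|_{L^2(ds)}$ directly and then uses only $a_n$, whereas you pass through the interpolation inequality~\eqref{Poincare4} and use both $a_0$ and $a_n$; either works.
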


\begin{proof}
Let $c \in \mc I^n(S^1,\R^d)$ and $h \in H^n(S^1,\R^d)$. Then
\begin{equation*}
\left\| D_{c,h} (\log |c'|)\right\|_{L^\infty} = \left\| \langle D_s h, D_s c \rangle \right\|_{L^\infty}
\leq \| D_s h \|_{L^\infty}\,.
\end{equation*}
Using Lemma~\ref{lemma:PoincareIneq} we get
\begin{equation*}
\| D_s h \|_{L^\infty} \leq \frac{\ell_c^{1/2}}{2} \| D^2_s c_t \|_{L^2(ds)} \leq \frac{\ell_c^{n-3/2}}{2^{n-1}} \| D^n_s c_t \|_{L^2(ds)}\,.
\end{equation*}
Fix a metric ball $B(c_0,R)$ and observe that we have
\[
\| D^n_s h \|_{L^2(ds)} \leq \frac{1}{a_n(\ell_c)} \sqrt{G_c(h,h)} \, .
\]
Lemma~\ref{lemma:LengthBound} implies that $\ell_c$ and $\ell_c^{-1}$ are bounded on the ball $B(c_0,R)$ and since $a_n$ is smooth, $a_n(\ell_c)^{-1}$ is bounded as well. Thus
\[
\left\| D_{c,h} (\log |c'|)\right\|_{L^\infty} \leq C \sqrt{ G_c(h,h) }
\]
for some constant $C = C(c_0,R)$ and $c \in B(c_0,R)$. Hence $\log |c'|$ is Lipschitz continuous on every metric ball by Lemma~\ref{lemma:TangentCondLipschitz}. The boundedness of $|c'(\theta)|$ on each metric ball is a direct consequence of Lipschitz continuity.
\end{proof}

Following \cite{Bruveris2014} and \cite[Section~3]{Bruveris2015} we define the following property for a Riemannian metric $G$ on $\Imm(S^1,\R^d)$. It states that a metric satisfying \eqref{assum:Hn} is stronger than a Sobolev metric with constant coefficients of order $n$ with constants that can be chosen uniformly on metric balls.
\begin{itemize}
\item[] Given a metric ball $B(c_0,R)$ in $\Imm(S^1,\R^d)$, there exists a constant $C$, such that
\begin{equation}
\label{assum:Hn}
\| h \|_{H^n(ds)} = \int_{S^1} |h|^n + |D_s h|^n \ud s \leq C G_c(h,h) \tag{$H_n$} 
\end{equation}
holds for all $c \in B(c_0,R)$.
\end{itemize}

The proof of the following proposition can be found in~\cite[Proposition~3.5]{Bruveris2015} and \cite[Lemma~5.1]{Bruveris2014}.

\begin{proposition}
\label{prop:HndThetaHndSEquiv}
Let $G$ be a weak Riemannian metric on $\Imm(S^1,\R^d)$ of order $n \geq 2$ satisfying \eqref{assum:Hn}. Then, given a metric ball $B(c_0,R)$ in $\Imm(S^1,\R^d)$ there exists a constant $C$ such that
\begin{equation*}
C^{-1} \|h \|_{H^n(d\theta)} \leq \|h \|_{H^n(ds)} \leq C \|h\|_{H^n(d\theta)} 
\end{equation*}
holds for all $c \in B(c_0,R)$ and all $h \in H^n(S^1,\R^d)$.
\end{proposition}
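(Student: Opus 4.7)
The plan is to derive the norm equivalence from uniform control of $|c'(\theta)|$ and of $\|c\|_{H^n(d\theta)}$ on the metric ball $B(c_0,R)$. First I would establish the pointwise bounds on $|c'|$; then bootstrap to bound $c$ itself in $H^n(d\theta)$; finally, a Fa\`a di Bruno expansion would convert these bounds into the norm equivalence on tangent vectors.

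For the pointwise bounds, combining \eqref{assum:Hn} with Lemma~\ref{lemma:PoincareIneq}\eqref{Poincare4} yields
\[
\|D_s^k h\|_{L^2(ds)}^2 \;\leq\; C\, G_c(h,h),\qquad 0 \leq k \leq n,
\]
uniformly on $B(c_0,R)$. The arguments of Lemma~\ref{lemma:LengthBound} and Lemma~\ref{lemma:logNormCprimeLipCont} then transfer almost verbatim: $|D_{c,h}\ell_c|$ and $\|D_{c,h}\log|c'|\|_{L^\infty}$ are dominated by $\sqrt{G_c(h,h)}$ up to factors of $\ell_c$ that a $\log \ell_c$ substitution absorbs. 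Lemma~\ref{lemma:TangentCondLipschitz} then yields $C^{-1}\leq \ell_c \leq C$ and $C^{-1} \leq |c'(\theta)| \leq C$ on the ball, so that $L^2(d\theta)$ and $L^2(ds)$ are uniformly equivalent.

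For the top-order comparison, I would use the Fa\`a di Bruno expansion
\[
D_s^n h \;=\; |c'|^{-n}\p_\theta^n h \;+\; \sum_{k=1}^{n-1} R_{n,k}\!\left(|c'|^{-1},\p_\theta|c'|,\dots,\p_\theta^{n-k}|c'|\right)\p_\theta^k h,
\]
together with its inverse expressing $\p_\theta^n h$ in terms of the $D_s^k h$. The leading coefficient $|c'|^{-n}$ is bounded by the previous step. Each intermediate coefficient $R_{n,k}$ is a polynomial in $|c'|^{-1}$ and the derivatives $\p_\theta^j |c'|$ for $j \leq n-k$; these are smooth functions of $c',\dots,c^{(j+1)}$ and can be estimated in $L^\infty$ (or in $L^2$, paired by H\"older's inequality with the Sobolev bound on $\p_\theta^k h$ for $k \leq n-1$) once a uniform $H^n(d\theta)$-bound on $c$ itself is available.

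The main obstacle is precisely this $H^n(d\theta)$-bound on $c$. I would prove it by induction on $n$. For the base case $n=2$, the pointwise bound on $|c'|$ together with the expansion $c_t'' = |c'|^2 D_s^2 c_t + |c'|\inv |c'|'\, c_t'$ and the Poincar\'e estimate $\|c_t'\|_{L^\infty} \leq C\sqrt{G_c(c_t,c_t)}$ give the inequality $\|c_t''\|_{L^2(d\theta)} \leq C\bigl(1 + \|c''\|_{L^2(d\theta)}\bigr)\sqrt{G_c(c_t,c_t)}$ along any path; Gronwall then closes the estimate. The inductive step proceeds identically, using the Fa\`a di Bruno expansion applied to $h = c_t$, with the $H^{n-1}(d\theta)$-bound supplied by the inductive hypothesis controlling all but the top-order coefficient. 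Feeding the resulting bound on $\|c\|_{H^n(d\theta)}$ back into the Fa\`a di Bruno expansion for $D_s^n h$ and $\p_\theta^n h$ then produces both directions of the claimed norm equivalence.
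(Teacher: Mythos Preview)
Your outline is essentially the argument of \cite[Proposition~3.5]{Bruveris2015} and \cite[Lemma~5.1]{Bruveris2014}, which is precisely where the present paper defers the proof; so you have reconstructed the intended route. The three stages you describe---pointwise control of $|c'|$, an $H^n(d\theta)$-bound on $c$ via a Gronwall bootstrap, and then the Fa\`a di Bruno comparison---are exactly the ingredients used there.

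One small correction: the ``$\log\ell_c$ substitution'' you mention does not by itself absorb the factor of $\ell_c^{1/2}$ in $|D_{c,h}\ell_c|\leq \ell_c^{1/2}\|D_sh\|_{L^2(ds)}$, since $|D_{c,h}\log\ell_c|\leq C\ell_c^{-1/2}\sqrt{G_c(h,h)}$ still blows up as $\ell_c\to 0$. The right two-step version is: first bound $\sqrt{\ell_c}$ via $|D_{c,h}\sqrt{\ell_c}|\leq C\sqrt{G_c(h,h)}$ and Lemma~\ref{lemma:TangentCondLipschitz}, giving an upper bound on $\ell_c$; then the estimate $\|D_{c,h}\log|c'|\|_{L^\infty}\leq C\sqrt{\ell_c}\sqrt{G_c(h,h)}$ has a uniformly bounded coefficient on the ball, yielding the two-sided bound on $|c'|$ (and hence the lower bound on $\ell_c$ a posteriori). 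With that adjustment, the rest of your argument goes through.
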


We now show that length-weighted Sobolev metrics also have property $\eqref{assum:Hn}$ and are uniformly equivalent to the flat Sobolev metric on any metric ball.
\begin{proposition}
\label{prop:EquivRieMetricFlatLenWgt}
Let $G$ be a length-weighted Sobolev metric of order $n \geq 2$, satisfying \eqref{eq:H0} and \eqref{eq:Hinfty}. Then $G$ satisfies \eqref{assum:Hn}. Furthermore, given a metric ball $B(c_0,R)$ in $\mc I^n(S^1,\R^d)$ there exists a constant $C$ such that
\begin{equation*}
C^{-1} \|h \|_{H^n(d\theta)} \leq \sqrt{G_c(h,h)} \leq C \|h\|_{H^n(d\theta)} 
\end{equation*}
holds for all $c \in B(c_0,R)$ and all $h \in H^n(S^1,\R^d)$.
\end{proposition}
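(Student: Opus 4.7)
The plan is to leverage the length bound from Lemma~\ref{lemma:LengthBound} to reduce the problem to the constant-coefficient case, and then invoke Proposition~\ref{prop:HndThetaHndSEquiv} to obtain the equivalence with the flat Sobolev norm. The first step is to observe that on a fixed metric ball $B(c_0,R)$ the length satisfies $C\inv \leq \ell_c \leq C$ by Lemma~\ref{lemma:LengthBound}, so $\ell_c$ ranges in a compact subinterval of $(0,\infty)$. Because each $a_k$ is a smooth positive/nonnegative function on $(0,\infty)$ with $a_0,a_n$ strictly positive, we immediately obtain uniform bounds
\[
0 \le a_k(\ell_c) \le M, \qquad a_0(\ell_c),\,a_n(\ell_c) \ge m > 0
\]
for all $c \in B(c_0,R)$ and all $0 \le k \le n$.

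With these bounds in hand, verifying \eqref{assum:Hn} is straightforward: from the definition of $G_c$ we have $m\|h\|_{L^2(ds)}^2 \le a_0(\ell_c)\|h\|_{L^2(ds)}^2 \le G_c(h,h)$ and analogously $m\|D_s^n h\|_{L^2(ds)}^2 \le G_c(h,h)$, whence
\[
\|h\|_{H^n(ds)}^2 = \|h\|_{L^2(ds)}^2 + \|D_s^n h\|_{L^2(ds)}^2 \le \frac{2}{m}\,G_c(h,h) \, .
\]
Thus $G$ satisfies \eqref{assum:Hn} on $B(c_0,R)$, and Proposition~\ref{prop:HndThetaHndSEquiv} yields the equivalence of $\|\cdot\|_{H^n(ds)}$ and $\|\cdot\|_{H^n(d\th)}$ uniformly on the ball. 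Combining these two facts gives $\|h\|_{H^n(d\th)} \le C\|h\|_{H^n(ds)} \le C\sqrt{G_c(h,h)}$, which is one direction of the desired equivalence.

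For the reverse inequality $\sqrt{G_c(h,h)} \le C\|h\|_{H^n(d\th)}$, I would use the intermediate Poincar\'e bound from Lemma~\ref{lemma:PoincareIneq}\eqref{Poincare4}, which gives $\|D_s^k h\|_{L^2(ds)}^2 \le \|h\|_{H^n(ds)}^2$ for every $0 \le k \le n$. Together with the upper bound $a_k(\ell_c) \le M$ this produces
\[
G_c(h,h) = \sum_{k=0}^n a_k(\ell_c)\|D_s^k h\|_{L^2(ds)}^2 \le (n+1)M\,\|h\|_{H^n(ds)}^2 ,
\]
and a further application of Proposition~\ref{prop:HndThetaHndSEquiv} converts this into the bound $G_c(h,h) \le C\|h\|_{H^n(d\th)}^2$ on $B(c_0,R)$.

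The only nontrivial ingredient is the control of $\ell_c$ on metric balls, which has already been done in Lemma~\ref{lemma:LengthBound} using precisely the conditions \eqref{eq:H0} and \eqref{eq:Hinfty}; once that bound is available, the proof is essentially a short accounting argument that assembles Lemma~\ref{lemma:PoincareIneq} and Proposition~\ref{prop:HndThetaHndSEquiv}. I expect no further obstacles.
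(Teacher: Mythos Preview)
Your proposal is correct and follows essentially the same approach as the paper: bound $\ell_c$ via Lemma~\ref{lemma:LengthBound}, deduce uniform bounds on the coefficients, verify \eqref{assum:Hn}, use Lemma~\ref{lemma:PoincareIneq}\eqref{Poincare4} for the upper bound, and then invoke Proposition~\ref{prop:HndThetaHndSEquiv}. The one technical point you glossed over is that Proposition~\ref{prop:HndThetaHndSEquiv} is stated for metric balls in $\on{Imm}(S^1,\R^d)$, not in $\mc I^n(S^1,\R^d)$; the paper handles this by first establishing the inequalities on $B_{\on{Imm}}(c_0,R) = B(c_0,R)\cap\on{Imm}(S^1,\R^d)$ and then extending to all of $B(c_0,R)$ by density and continuity of $c \mapsto G_c(h,h)$.
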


\begin{proof}
Let $B(c_0,R)$ be a geodesic ball in $\mc I^n(S^1,\R^d)$ and denote by $B_{\on{Imm}}(c_0,R)$ the geodesic ball in $\on{Imm}(S^1,\R^d)$. Then \cite[Proposition~A.2]{Bruveris2015} shows that $B_{\on{Imm}}(c_0,R) = B(c_0,R) \cap \on{Imm}(S^1,\R^d)$. Furthermore, $G$ is a strong, smooth Riemannian metric on $\mc I^n(S^1,\R^d)$ and hence geodesic balls are open in the $H^n$-topology. Because $\on{Imm}(S^1,\R^d)$ is dense in $\mc I^n(S^1,\R^d)$, it follows that $B_{\on{Imm}}(c_0,R)$ is dense in $B(c_0,R)$.

By Lemma \ref{lemma:LengthBound} the length $\ell_c$ is bounded on $B(c_0,R)$ above and below, away from $0$. The coefficients $a_k$ are smooth functions and hence $B_k \leq a_k(\ell_c) \leq C_k$, with $B_k,C_k \geq 0$ and $B_0,B_n>0$ for all curves $c \in B(c_0,R)$. With $B = \min(B_0,B_n)$ we have
\[
B \| h \|_{H^n(ds)}^2 \leq \sum_{k = 0}^n \int_{S^1} B_k |D^k_s h|^2 \, \ud s \leq G_c(h,h)\,.
\]
This shows that $G$ satisfies \eqref{assum:Hn}. Using Lemma~ \ref{lemma:PoincareIneq}~\eqref{Poincare4} we get
\begin{equation*}
\sqrt{G_c(h,h)} \leq C' \| h \|_{H^n(ds)}\,,
\end{equation*}
for some constant $C'$. Combining the last two equations with Proposition~\ref{prop:HndThetaHndSEquiv} we obtain the equivalence
\[
C^{-1} \|h \|_{H^n(d\theta)} \leq \sqrt{G_c(h,h)} \leq C \|h\|_{H^n(d\theta)}\,,
\]
for some constant $C$ and all $c \in B_{\on{Imm}}(S^1,\R^d)$. Because $B_{\on{Imm}}(S^1,\R^d)$ is dense in $B(c_0,R)$ and $G$ depends continuously on $c$ the inequalities continue to hold for $c \in B(c_0,R)$.
\end{proof}

\begin{theorem}
\label{thm:main}
Let $G$ be a length-weighted Sobolev metric of order $n \geq 2$, satisfying \eqref{eq:H0} and \eqref{eq:Hinfty}. Then
\begin{enumerate}
\item $(\mc I^n(S^1,\R^d),\on{dist})$ is a complete metric space.
\item $(\mc I^n(S^1,\R^d),G)$ is geodesically complete. 
\item Any two curves in the same connected component can be joined by a minimizing geodesic.
\end{enumerate}
\end{theorem}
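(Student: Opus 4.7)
The three parts will be derived from Proposition~\ref{prop:EquivRieMetricFlatLenWgt}: since on any metric ball $\sqrt{G_c(\cdot,\cdot)}$ is uniformly equivalent to the flat Hilbert norm $\|\cdot\|_{H^n(d\theta)}$, each completeness question reduces to a standard Hilbert-manifold argument. The plan is to follow the template of \cite{Bruveris2014,Bruveris2015} for the constant-coefficient case; the integrability conditions \eqref{eq:H0} and \eqref{eq:Hinfty} enter only through the preparatory Lemmas~\ref{lemma:LengthBound} and~\ref{lemma:logNormCprimeLipCont}, which feed into Proposition~\ref{prop:EquivRieMetricFlatLenWgt}.

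For (1), take a Cauchy sequence $(c_m)$ in $(\mc I^n(S^1,\R^d),\on{dist})$. It lies in some closed metric ball $\overline{B(c_0,R)}$, and the lower bound in Proposition~\ref{prop:EquivRieMetricFlatLenWgt}, integrated along near-minimizing paths, makes $(c_m)$ also Cauchy in $H^n(d\theta)$; let $c_\infty \in H^n(S^1,\R^d)$ denote its limit. By Sobolev embedding $c_m \to c_\infty$ in $C^1$, and the uniform lower bound $|c_m'(\th)| \geq C^{-1}$ from Lemma~\ref{lemma:logNormCprimeLipCont} passes to the limit, so $c_\infty \in \mc I^n(S^1,\R^d)$. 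Applying the upper bound in Proposition~\ref{prop:EquivRieMetricFlatLenWgt} to the straight-line path $t \mapsto (1-t) c_m + t c_\infty$ (which, for large $m$, stays in a slightly enlarged metric ball) yields $\on{dist}(c_m,c_\infty) \leq C\|c_m - c_\infty\|_{H^n(d\th)} \to 0$.

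For (2), suppose $c:[0,T) \to \mc I^n(S^1,\R^d)$ is a maximal geodesic with $T < \infty$. Constancy of the speed $v$ forces $c([0,T))$ to have length at most $vT$, hence to lie in a single metric ball; by (1) any sequence $t_j \nearrow T$ converges to some $c_\infty \in \mc I^n(S^1,\R^d)$. Because $G$ is a smooth, strong Riemannian metric on the Hilbert manifold $\mc I^n(S^1,\R^d)$ (Lemma~\ref{lem:smoothMetric} together with Proposition~\ref{prop:EquivRieMetricFlatLenWgt}), the geodesic spray is a smooth second-order vector field on $T\mc I^n(S^1,\R^d)$ and the standard ODE flow extends the geodesic past $T$, contradicting maximality. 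For (3), fix $c_0,c_1$ in the same connected component; a minimizing sequence of $H^1$-paths stays in a common metric ball, Proposition~\ref{prop:EquivRieMetricFlatLenWgt} then provides a uniform bound on its $H^1([0,1],H^n(d\th))$-energy after constant-speed reparametrization, and a weakly convergent subsequence produces a Lipschitz limit which, thanks to Lemma~\ref{lemma:logNormCprimeLipCont} and lower semicontinuity of length, remains in $\mc I^n(S^1,\R^d)$ and realizes the geodesic distance; smoothness of this minimizer as a geodesic is then supplied by the abstract infinite-dimensional Hopf--Rinow-type statement of~\cite[Appendix~A]{Bruveris2015}.

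The main obstacle, preventing curves in a metric ball from degenerating into non-immersions either by shrinking to a point or by acquiring a vanishing derivative, has already been resolved by Lemma~\ref{lemma:LengthBound} and Lemma~\ref{lemma:logNormCprimeLipCont}. Once Proposition~\ref{prop:EquivRieMetricFlatLenWgt} is available, the three completeness statements become essentially formal consequences of it together with the general machinery for strong Hilbert Riemannian manifolds, so the real technical work of the theorem is concentrated in those preparatory results rather than in the present proof.
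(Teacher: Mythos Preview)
Your treatment of (1) and (2) is correct and matches the paper's argument: the paper also reduces metric completeness to the uniform equivalence in Proposition~\ref{prop:EquivRieMetricFlatLenWgt} together with the pointwise lower bound on $|c'|$ from Lemma~\ref{lemma:logNormCprimeLipCont}, and then invokes the standard fact (cited from Lang) that metric completeness of a strong Hilbert Riemannian manifold implies geodesic completeness.

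Part (3), however, has a genuine gap. You pass from a weakly convergent minimizing sequence in $H^1([0,1],H^n(d\theta))$ to a limit that ``realizes the geodesic distance'' by invoking ``lower semicontinuity of length''. But the $G$-energy $c\mapsto \int_0^1 G_{c(t)}(\dot c(t),\dot c(t))\,dt$ is \emph{not} automatically weakly lower semicontinuous: the integrand depends nonlinearly on $c$ both through the length-dependent coefficients $a_k(\ell_c)$ and through the arc-length derivatives $D_s^k=|c'|^{-1}\partial_\theta(\cdots)$. This is precisely the nontrivial point the paper addresses. It invokes \cite[Remark~5.4]{Bruveris2015}, which requires the metric to decompose as $G_c(h,h)=\sum_k \|A_k(c)h\|_{F_k}^2$ with operators $A_k$ satisfying the weak-continuity property
\[
c^j \rightharpoonup c \text{ in } H^1_t\mc I^n_\theta
\;\Longrightarrow\;
A_k(c^j)\dot c^j \rightharpoonup A_k(c)\dot c \text{ in } L^2(I,F_k)\,,
\]
and then verifies this for $A_k(c)h = a_k(\ell_c)\,D_s^k h$ using \cite[Lemma~5.9]{Bruveris2015} (weak convergence of $D_{c^j}^k\dot c^j$) together with the convergence $\ell_{c^j}\to\ell_c$. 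Only this structural property yields the required lower semicontinuity; your sketch omits it entirely. In addition, the appeal to ``\cite[Appendix~A]{Bruveris2015}'' for a Hopf--Rinow-type statement is a misattribution: that appendix concerns density of smooth immersions, and no infinite-dimensional Hopf--Rinow theorem supplies minimizing geodesics from metric completeness alone.
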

\begin{proof}
A verbatim repetition of the proof of \cite[Lemma~4.2]{Bruveris2015} shows the identity map
\[
\on{Id} : (\mc I^n(S^1,\R^d), \on{dist}) \to (\mc I^n(S^1,\R^d), \|\cdot\|_{H^n(d\th)} )
\]
is locally bi-Lipschitz. The remainder of the proof follows \cite[Theorem~4.3]{Bruveris2015}.

Let $(c^j)_{j \in \mathbb{N}}$ be a Cauchy sequence w.r.t the geodesic distance. The sequence is eventually contained in a metric ball $B(c_0,r)$ and hence it is also a Cauchy sequence w.r.t. the $H^n(d\theta)$-norm. Thus there exists a limit $c^* \in H^n(S^1,\R^d)$ and $\|c^j - c^*\|_{H^n(d\theta)} \to 0$. By Lemma \ref{lemma:logNormCprimeLipCont} there exists a constant $C$ such that we have the point-wise lower bound $\| c^j_\theta(\theta)\| \geq C > 0$. This also holds for the limit $c^\ast$ showing that $c^* \in \mc I^n(S^1,\R^d)$. Because the identity map is locally Lipschitz, we also have $\on{dist}(c^j,c^*) \to 0$. Therefore $(\mc I^n(S^1,\R^d),\on{dist})$ is a complete metric space. For a smooth strong Riemannian metric, metric completeness implies geodesic completeness, see \cite[VIII Prop 6.5]{Lang1999}. This is the second part of the statement.

To show the existence of minimizing geodesics we use \cite[Remark~5.4]{Bruveris2015}, which shows the existence of minimizing geodesics for metrics on $\mc I^n(S^1,\R^d)$ that are uniformly bounded and uniformly coercive with respect to the background $\| \cdot \|_{H^n(d\th)}$-norm on metric balls and are of the form
\[
G_c(h,h) = \sum_{k=1}^N \| A_k(c) h \|_{F_k}^2\,,
\]
with some Hilbert spaces $F_k$ and smooth maps $A_k : \mc I^n \to L(H^n,F_k)$, provided the maps $A_k$ have the property
\[
c^j \to c \text{ weakly in }H^1_t\mc I^n_\th
\quad\Rightarrow\quad
A_k(c^j)\dot c^j \to A_k(c) \dot c \text{ weakly in } L^2(I,F_k)\,.
\]
Here $H^1_t\mc I^n_\th = H^1(I, \mc I^n(S^1,\R^d))$. In our case $N=n$, $F_k = L^2(S^1,\R^d)$ and $A_k(c) h = a_k(\ell_c) D_s^k h$. Weak convergence $c^j \to c$ in $H^1_t\mc I^n_\th$ implies convergence $\ell_{c^j} \to \ell_c$ and weak convergence $D_{c^j}^k \dot c^j \to D_{c}^k \dot c$ in $L^2(I,L^2)$ by \cite[Lemma~5.9]{Bruveris2015}. Uniform boundedness and uniform coercivity on metric balls was shown in~\ref{prop:EquivRieMetricFlatLenWgt}. Therefore we obtain the existence of minizing geodesics.
\end{proof}

Let $G$ be a length-weighted Sobolev metric of of order $n \geq 2$, satisfying \eqref{eq:H0} and \eqref{eq:Hinfty}. The same proof as in \cite[Section~4.4]{Bruveris2015} shows that the space $(\on{Imm}(S^1,\R^d), G)$ of smooth immersions is geodesically complete and that its metric completion with respect to the geodesic distance is $\mc I^n(S^1,\R^d)$.

When $c_1, c_2 \in \on{Imm}(S^1,\R^d)$ one can ask if the minimizing geodesic connecting them also lies in $\on{Imm}(S^1,\R^d)$ or only in $\mc I^n(S^1,\R^d)$. The results of~\cite[Section~6]{Bruveris2016c_preprint} generalize naturally to length-weighted metrics, showing that the geodesic is $C^\infty$-smooth provided $c_1$ and $c_2$ are nonconjugate along the geodesic.

\subsection{Shape space}

The metric $G$ is reparametrization invariant, i.e., invariant under the action of $\on{Diff}(S^1)$, and it induces a smooth Riemannian metric on the shape space of unparametrized curves. For technical reasons we have to restrict ourselves to the set $\on{Imm}_{f}(S^1,\R^d)$ of free immersions, meaning those immersions upon which $\on{Diff}(S^1)$ acts freely:
\[
c \in \on{Imm}_{f}(S^1,\R^d) \;\text{ iff }\; \big(c \circ \ph = c \;\Rightarrow\; \ph = \on{Id}_{S^1} \big)\,.
\]
The set $\on{Imm}_{f}(S^1,\R^d)$ is the open and dense set of regular points for the $\on{Diff}(S^1)$-action and we denote the quotient space of unparametrized curves by
\[
B_{i,f}(S^1,\R^d) = \on{Imm}_{f}(S^1,\R^d)/\on{Diff}(S^1)\,.
\]
It is shown in \cite[Section~1.5]{Michor1991} that $B_{i,f}$ is a smooth Fr\'echet manifold and the projection $\pi : \on{Imm}_f \to B_{i,f}$ is a smooth prinicipal fibration with structure group $\on{Diff}(S^1)$. The space of unparametrized Sobolev curves is
\[
\mc B^n(S^1,\R^d) = \mc I^n(S^1,\R^d)/ \mc D^n(S^1)\,,
\]
and we write again $\pi : \mc I^n(S^1,\R^d) \to \mc B^n(S^1,\R^d)$ for the projection.

Following the arguments of \cite[Section~6]{Bruveris2015} one sees that $(\mc B^n(S^1,\R^d), \on{dist}_{\mc B})$ with the quotient metric induced by the geodesic distance is a complete metric space and given $C_1, C_2 \in \mc B^n(S^1,\R^d)$ in the same connected component, there exist $c_1, c_2 \in \mc I^n(S^1,\R^d)$ with $c_1 \in \pi\inv(C_1)$ and $c_2 \in \pi\inv(C_2)$ such that
\[
\on{dist}_{\mc B}(C_1, C_2) = \on{dist}_{\mc I}(c_1, c_2)\,.
\]
Furthermore, $(\mc B^n(S^1,\R^d), \on{dist}_{\mc B})$ is a length space and any two shapes in the same connected component can be joined by a minimizing geodesic. Here a geodesic is to be understood in the sense of metric spaces, because $\mc B^n(S^1,\R^d)$ does not carry the structure of a smooth manifold.

The space $B_{i,f}(S^1,\R^d)$ is a smooth manifold and $G$ induces a Riemannian metric $G^{b}$ on it such that the projection $\pi : (\on{Imm}_f(S^1,\R^d), G) \to (B_{i,f}(S^1,\R^d), G^B)$ is a Riemannian submersion. Then the metric completion of $(B_{i,f}(S^1,\R^d), \on{dist}_B)$ with respect to the geodesic distance is equal to $\mc B^n(S^1,\R^d)$. The proofs of these statements are the same as in \cite[Section~6]{Bruveris2015}.

\section{Counterexample}
\label{sec:IncompletenessCounterEx}
The completeness results were proven under the assumption that \eqref{eq:H0} and \eqref{eq:Hinfty} hold, i.e. we control the behavior of some coefficient $a_1(\ell_c), \dots, a_n(\ell_c)$ other than $a_0(\ell_c)$. Even though controlling the $L^2$-coefficient guarantees long radial paths, it does not allow us to estimate $\ell_c$ along other paths. Here we construct a family of counterexamples which show that controlling the $L^2$-coefficient is not enough for metric completeness. 

We consider the second order metric
\[
G_c(h,k) = \int_{S^1} a_0(\ell_c) \langle h, k \rangle + a_2(\ell_c) \langle D_s^2 h, D_s^2 k \rangle \ud s\,,
\]
with coefficients $a_0(\ell_c) = \ell_c^q$ and $a_2(\ell_c) = \ell^p$ and $p,q \in \R$. The following observations are straight forward,
\begin{align*}
I_{\infty,0} &= \int_1^\infty r^{1/2+ q/2} \ud r\,,
 & I_{\infty,0} = \infty \quad &\Leftrightarrow \quad q \geq -3\,, \\
I_{0,0} &= \int_0^1 r^{1/2+ q/2} \ud r\,,
 & I_{0,0} = \infty \quad &\Leftrightarrow \quad q \leq -3\,, \\
I_{\infty,2} &= \int_1^\infty r^{-3/2+ p/2} \ud r \,,
& I_{\infty,2} < \infty \quad &\Leftrightarrow \quad p < 1\,, \\
I_{0,2} &= \int_0^1 r^{-3/2+ p/2} \ud r\,,
 & I_{0,2} < \infty \quad &\Leftrightarrow \quad p > 1\,.
\end{align*}

In the following thereom we construct two families of metrically incomplete Riemannian metrics. Both families satisfy $I_{\infty,0} = \infty$ and $I_{0,0} = \infty$. The first family additionally satisfies \eqref{eq:H0}, but not \eqref{eq:Hinfty}, while the second family satisfies \eqref{eq:Hinfty}, but not \eqref{eq:H0}. Hence, we cannot extend the maximum in \eqref{eq:H0} and \eqref{eq:Hinfty} to $0 \leq k \leq n$.

\begin{proposition}
\label{prop:counterexample}
The second order length-weighted Sobolev metric
\[
G_c(h,k)  = \int_{S^1} \ell_c^q \langle h, k \rangle + \ell_c^p \langle D_s^2 h, D_s^2 k \rangle \ud s \, ,
\]
is not metrically complete for
\begin{enumerate}
\item $q = -3$ and $p < 1$ or 
\item $q = -3$ and $p > 1$.
\end{enumerate}

\end{proposition}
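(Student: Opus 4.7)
The plan is to exhibit, in each case, a smooth path $c:[0,T) \to \mc I^n(S^1,\R^d)$ of finite Riemannian length whose limit as $t\to T^-$ fails to lie in $\mc I^n$. Any sequence $(c(t_k))_{k\in\mathbb N}$ with $t_k\nearrow T$ is then Cauchy for the geodesic distance without having a limit in $\mc I^n$, witnessing failure of metric completeness. In case (i) the path should realize $\ell_{c(t)}\to\infty$, and in case (ii) it should realize $\ell_{c(t)}\to 0$. Both escape modes are consistent with the hypotheses because, precisely when $p<1$ (resp.\ $p>1$), the lower bound coming from the proof of Lemma \ref{lemma:LengthBound} with $k=2$,
\[
\sqrt{G_c(\dot c,\dot c)} \;\geq\; C\,\ell_c^{(p-3)/2}\,|\dot\ell_c|,
\]
is integrable near $\ell_c=\infty$ (resp.\ near $\ell_c=0$), so no \emph{a priori} lower bound on Riemannian length can rule out a length-escaping Cauchy path.

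Before constructing the path I would record that radial deformations $c(t)=\rho(t)c_0$ do not serve as the counterexample: since $q=-3$ gives $I_{\infty,0}=I_{0,0}=\infty$, the necessary-condition computation preceding \eqref{eq:H0}--\eqref{eq:Hinfty} shows every radial path has infinite Riemannian length, both toward $\ell=\infty$ and toward $\ell=0$. The counterexample must therefore be genuinely non-radial, exploiting the fact that the coefficient $a_0(\ell)=\ell^{-3}$ controls only radial motion. The useful identity is $\dot\ell_c = -\int_{S^1} h_n\,\kappa\ud s$ for a normal variation $h=\dot c = h_n N$, together with Cauchy--Schwarz $|\dot\ell_c|\le \|h\|_{L^2(ds)}\|\kappa\|_{L^2(ds)}$: along curves with large $\|\kappa\|_{L^2(ds)}$ one can change $\ell_c$ efficiently while keeping $\|\dot c\|_{L^2(ds)}$ small.

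In case (i) I would construct a family $c_R$, $R\geq 1$, with $\ell_{c_R}\to\infty$ whose geometry concentrates curvature (for instance, assembled from many small circular bumps), and choose the variation $\partial_R c_R$ essentially proportional to $\kappa\,N_{c_R}$. The target estimates are
\[
\|\partial_R c_R\|_{L^2(ds)}^2 \;\lesssim\; \frac{|\partial_R\ell_{c_R}|^2}{\|\kappa\|_{L^2(ds)}^2}
\quad\text{and}\quad
\|D_s^2\partial_R c_R\|_{L^2(ds)}^2 \;\lesssim\; \ell_{c_R}^{-3}\,|\partial_R\ell_{c_R}|^2,
\]
so that $\sqrt{G_{c_R}(\partial_R c_R,\partial_R c_R)} \lesssim \ell_{c_R}^{(p-3)/2}\,|\partial_R\ell_{c_R}|$ and the total Riemannian length is bounded by a constant multiple of $\int_1^\infty \ell^{(p-3)/2}\ud\ell < \infty$. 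Case (ii) is analogous, shrinking rather than growing, and ends with the convergent integral $\int_0^1 \ell^{(p-3)/2}\ud\ell < \infty$.

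The main obstacle lies in the actual construction. Naive candidate families (rescalings, translations, stadium-type elongations, ellipse collapses) all satisfy $\|\partial_R c_R\|_{L^2(ds)}^2 \gtrsim \ell_{c_R}\,|\partial_R\ell_{c_R}|^2$; combined with $a_0(\ell)=\ell^{-3}$ this produces a contribution $\int \ell^{-1}\ud\ell = \infty$ to the Riemannian length, a logarithmic divergence that just defeats integrability. Overcoming it requires a genuine curvature-concentration construction so that $\|\kappa\|_{L^2(ds)}$ is large compared with $|\partial_R\ell_{c_R}|$ along the family. The delicate step is then to verify that, on such a specifically engineered family, the second-derivative norm $\|D_s^2\partial_R c_R\|_{L^2(ds)}$ really does respect the bound $\ell_{c_R}^{-3/2}|\partial_R\ell_{c_R}|$ as $\ell_{c_R}$ degenerates, so that the $a_2$-contribution also remains integrable.
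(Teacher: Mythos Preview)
Your outline correctly identifies the overall strategy (produce a Cauchy sequence whose lengths escape to $0$ or $\infty$), correctly rules out radial paths via the $q=-3$ computation, and correctly locates the mechanism (curvature concentration, ``many small circular bumps'').  But it remains an outline: you stop precisely where the work begins, acknowledging that ``the main obstacle lies in the actual construction'' without producing one.  As written this is not a proof.

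Moreover, the target estimates you aim for are too optimistic and are not what the paper actually achieves.  In particular you ask for
\[
\|D_s^2\partial_R c_R\|_{L^2(ds)}^2 \;\lesssim\; \ell_{c_R}^{-3}\,|\partial_R\ell_{c_R}|^2,
\]
so that the $a_2$-term by itself already yields the integrand $\ell^{(p-3)/2}$.  The paper's construction does not get anywhere near this: along its family one has $\ell_c\sim r_n\lambda_n$ but $\|D_s^2\partial_t c\|_{L^2(ds)}^2\lesssim r_n^{-1}\lambda_n^{9}$, so the $D_s^2$-norm is enormously larger than $\ell_c^{-3}$.  What saves the argument is an extra degree of freedom you have not introduced.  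The paper uses a \emph{two-parameter} family
\[
c_n(\theta)=r_n\bigl(1+\varepsilon\sin(\lambda_n\theta)\bigr)(\cos\theta,\sin\theta),
\]
with $\lambda_n\nearrow\infty$ an integer (the oscillation frequency) and $r_n=\lambda_n^{\alpha}$ (the scale).  Linear interpolation between successive $c_n$ gives $G$-length contributions bounded by $\lambda_n^{-1}+\lambda_n^{\alpha(p-1)+p+9}$, and one then chooses $\alpha$ large (for $p<1$) or very negative (for $p>1$) so that the second exponent is negative and the series $\sum_n\on{dist}(c_n,c_{n+1})$ converges, while $\ell_{c_n}\sim\lambda_n^{1+\alpha}$ goes to $\infty$ or $0$ accordingly.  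The point is that the crude $D_s^2$-bound is tolerated because the scale parameter $r_n$ can be tuned against the frequency $\lambda_n$; your single-parameter family indexed by $\ell_{c_R}$ alone does not leave room for this balancing, which is why your sharper target bound becomes necessary but is not obviously attainable.
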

\begin{proof}
We will construct Cauchy sequences of curves $(c_n)$ with respect to the geodesic distance, such that the sequence $(\ell_{c_n})$ of lengths approaches infinity for the first set of parameters and converges to $0$ for the second set of parameters. Hence the sequence of curves cannot converge to an element of $\mc I^2(S^1,\R^d)$ and hence the space cannot be metrically complete. 

Consider two geometric sequences $(r_n)_{n \in \mb N}$, $(\la_n)_{n \in \mb N}$ with $r_{n+1} = a r_n$, $\la_{n+1} = b \la_n$. We choose $\la_0 > 2$, $b > 1$ and $\la_0, b \in \mb N$; then $\la_n \geq 2$, $\la_n \nearrow \infty$ and $\la_n \in \mb N$. The choice of $r_0$ and $a$ will depend on $p$ and will be made later. Throughout the proof we will write $f \lesssim g$ to denote $f \leq C g$ with a constant $C$ that may depend on $\ep$, $r_0$, $\la_0$, $a$ and $b$.

Define the sequence of curves
\[
c_n(\th) = r_n \left( 1 + \ep \sin(\la_n \th)\right) \vec{n}\,,
\]
with $\vec{n} = (\cos \th, \sin \th)$ and $0 < \ep < \frac 13$. Set $\vec v = (-\sin \th, \cos \th)$; then $\vec n' = \vec v$ and $\vec v' = -\vec n$. The curve $c_n$ is a circle of radius $r_n$ with $2\lambda_n$ bumps of amplitude $\varepsilon$. 

We want to estimate the geodesic distance $\on{dist}(c_n, c_{n+1})$. To do so, we define the intermediate curve
\[
\tilde c_n(\th) = r_{n+1} \left(1 + \ep \sin (\la_{n} \th) \right) \vec{n}\,.
\]
We will estimate $\on{dist}(c_n, \tilde c_n)$ and $\on{dist}(\tilde c_n, c_{n+1})$ separately using linear paths between the curves. The derivatives of $c_n$ are
\begin{align*}
c_n'(\th) &= r_n \left( 1 + \ep \sin(\la_n \th) \right) \vec v 
+ \ep r_n\la_n \cos(\la_n \th)\, \vec n \\
c''_n(\th) &= 2\ep r_n \la_n \cos(\la_n \th)\, \vec v 
- r_n\left(1 + \ep\left(1 + \la_n^2\right) \sin(\la_n \th) \right) \vec n\,.
\end{align*}
We have the following pointwise estimates,
\begin{align*}
|c_n(\th)| &\leq r_n (1 + \ep) \lesssim r_n \\
|c_n'(\th)| &\leq r_n \left( 1 + \ep + \ep \la_n \right) 
\leq r_n \left( 2 + \la_n \right) \lesssim r_n \la_n \\ 
|c''_n(\th)| &\leq r_n \left(1 + \ep + 2\ep\la_n + \ep \la_n^2\right)
\leq r_n(2 + 2\la_n + \la_n^2) \lesssim r_n \la_n^2\,. 
\end{align*}
For $\tilde c_n$ we have the same estimates
\begin{align*}
|\tilde c_n(\th)| & \lesssim r_n &
|\tilde c_n'(\th)| & \lesssim r_n \la_n &
|\tilde c''_n(\th)| & \lesssim r_n \la_n^2 \,,
\end{align*}
because $r_{n+1} \lesssim r_n$.

To estimate $\on{dist}(c_n, \tilde c_n)$ we define the path
\begin{align*}
c(t,\th) &= (1-t) c_n(\th) + t \tilde c_n(\th) \\
&= (1-t) r_n \left(1 + \ep \sin(\la_n \th) \right) \vec n
+ t r_{n+1} \left( 1 + \ep \sin(\la_{n} \th) \right) \vec n \\
&= \left(r_n + t(r_{n+1} - r_n) \right) 
\left(1 + \ep \sin(\la_n \th) \right) \vec n
\end{align*}
Then
\begin{align*}
c'(t,\th) 
&= \left(r_n + t(r_{n+1} - r_n) \right)
\big[ \ep\la_n \cos (\la_n \th) \vec n + (1 + \ep \sin(\la_n \th)) \vec v \big]
\end{align*}
and because $\langle \vec n, \vec v \rangle = 0$, we have the lower bound
\[
|c'| \geq \left(r_n + t(r_{n+1} - r_n) \right) (1 + \ep \sin(\la_n \th))
\gtrsim (1-\ep) r_{n} \gtrsim r_n \,.
\]
We will also need a slightly sharper lower bound for the length $\ell_{c_n}$. Starting from
\[
|c'(t,\th)| \geq (r_n + t(r_{n+1} - r_n)) \ep \la_n \left|\cos(\la_n \th)\right|
\gtrsim r_n \la_n \left| \cos(\la_n \th)\right|\,,
\]
we obtain by integration, since $\int_0^{2\pi} |\cos \la_n \th| \ud \th = 4$ for $\la_n \in \mb N$, the estimate $\ell_{c} \gtrsim r_n \la_n$.
Thus we have
\[
r_n\la_n  \lesssim \ell_{c} \lesssim r_n \la_n\,.
\]
Next we need to estimate the velocity of the path
\[
\p_t c(t,\th) = \tilde c_n(\th) - c_n(\th)\,.
\]
The simple estimate is
\[
|\p_t c| \lesssim r_n\,,
\]
and therefore
\[
\int_{S^1} \left| \p_t c \right|^2 |c'| \ud \th \lesssim r_n^3 \la_n\,.
\]
We also need to estimate $D_s^2 (\p_t c)$. For this we use the formula
\[
D_s^2 h = \frac{1}{|c'|} \left(\frac{1}{|c'|} h'\right)'
= \frac{1}{|c'|^2}h'' - \frac{1}{|c'|^4}\langle c',c''\rangle h'\,.
\]
Up to constants we obtain
\begin{align*}
\left|D_s^2 (\p_t c)\right|
&\lesssim r_{n}^{-2}\cdot r_n \la_n^2 + r_{n}^{-4} \cdot r_n \la_n \cdot r_n \la_n^2 \cdot r_n \la_n \\
&\lesssim r_n^{-1} \la_n^2 + r_n^{-1} \la_n^4 \lesssim r_n^{-1}\la_n^4 \,.
\end{align*}
Thus
\[
\int_{S^1} \left| D_s^2(\p_t c)\right|^2 |c'| \ud \th \lesssim r_n^{-2}\la_n^8 \cdot r_n \la_n \lesssim r_n^{-1} \la_n^9\,.
\]

\begin{figure}
\centering
	\includegraphics[width=.4\textwidth]{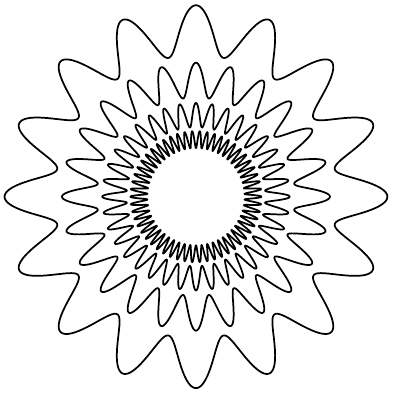}
  \hspace{.1\textwidth}
	\includegraphics[width=.4\textwidth]{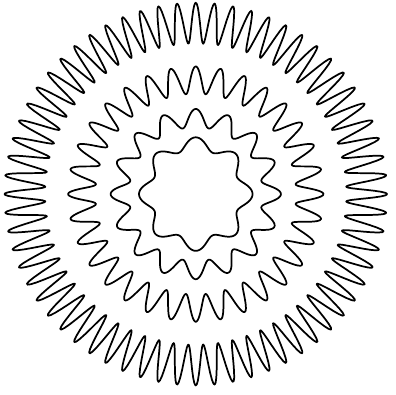}	
\caption{Illustration of the curves $c_n$ using in the counterexample constructed in Proposition~\ref{prop:counterexample}. In both cases $\la_n \to \infty$. The figure on the left shows the case $r_n \to 0$ while the figure on the right shows $r_n \to \infty$.}
	\label{fig:curves}
\end{figure}

We will obtain similar estimates for $\on{dist}(\tilde c_n, c_{n+1})$. Define the path
\begin{align*}
c(t,\th) &= (1-t)\tilde c_n(\th) + t c_{n+1}(\th) \\
&= \big[ r_{n+1} + \ep r_{n+1} \left( (1-t) \sin(\la_{n}\th) 
+ t \sin(\la_{n+1} \th) \right) \!\big] \vec n\,.
\end{align*}
Then
\begin{align*}
c'(t,\th) &= \big[ r_{n+1} + \ep r_{n+1} \left( (1-t) \sin(\la_{n}\th) 
+ t \sin(\la_{n+1} \th) \right) \!\big] \vec v \\
&\qquad {}+ \ep r_{n+1} \left( (1-t) \la_n \cos (\la_n \th) 
+ t \la_{n+1} \cos (\la_{n+1} \th) \right) \vec n\,,
\end{align*}
and
\[
|c'| \geq \big| r_{n+1} + \ep r_{n+1} \left( (1-t) \sin(\la_{n}\th) 
+ t \sin(\la_{n+1} \th) \right) \!\big|
\geq (1 - 2\ep) r_{n+1} \gtrsim r_{n}\,.
\]
We also have the estimate
\[
|c'(t,\th)| \geq \ep r_{n+1}
\left| (1-t) \la_n \cos (\la_n \th) 
+ t \la_{n+1} \cos (\la_{n+1} \th) \right|\,,
\]
which allows us to find a lower bound for the length -- note that $\la_n \in \mb N$,
\begin{align*}
\ell_{c_n} &\gtrsim r_n\la_n \int_0^{2\pi} \left|
(1-t) \cos(\la_n \th) + tb \cos (b \la_n \th) \right| \ud \th \\
&= r_n\la_n \int_0^{2\pi} \left|
(1-t) \cos(\th) + tb \cos (b \th) \right| \ud \th \\
&\gtrsim r_n \la_n\,.
\end{align*}
The last inequality is independent of $t$, because the path $t \mapsto (1-t) \cos \th + tb \cos(b\th)$ into $L^1$ is continuous and does not pass through the zero function. Thus we have again the upper and lower bounds
\[
r_n \la_n \lesssim \ell_c \lesssim r_n \la_n\,,
\]
and we can derive the estimates
\begin{align*}
\int_{S^1} \left| \p_t c\right|^2 |c'| \ud \th & \lesssim r_n^3 \la_n\,, &
\int_{S^1} \left| D_s^2 \p_t c\right|^2 |c'| \ud \th & \lesssim r_n^{-1} \la_n^9\,,
\end{align*}
as before. Now we proceed to choose $r_n$ depending on $p$.

{\bfseries Case (1).}
Note that by the bounds on $\ell_c$, we have
\begin{align*}
a_0(\ell_c) &\lesssim r_n^{-3}\la_n^{-3}\,, &
a_2(\ell_c) &\lesssim r_n^{p}\la_n^{p}\,,
\end{align*}
with $p < 1$. Therefore
\begin{align*}
\on{dist}(c_n, \tilde c_n)^2 &\lesssim r_n^{-3}\la_n^{-3} \cdot r_n^{3}\la_n + 
r_n^{p}\la_n^{p} \cdot r_n^{-1}\la_n^9 \\
& \lesssim \la_n^{-2} + r_n^{p-1}\la_n^{p+9} \,.
\end{align*}
If we choose $r_n = \la_n^{\al}$ for some $\al$ we get the estimate
\[
\on{dist}(c_n, \tilde c_n)^2 \lesssim \la_n^{-2} + \la_n^{\al(p-1) + p+9} \,.
\]
We choose $\al$ such that it satisfies
\[
\al > \frac{p+9}{1-p} > -1\,.
\]
This gives the estimate
\[
\on{dist}(c_n, \tilde c_n)^2 \lesssim \lambda_n^{-2} + \lambda_n^{\be}\,,
\]
with $\be = \al(p-1) + p +9 < 0$, and the same estimates hold for $\on{dist}(\tilde c_n, c_{n+1})$. Therefore 
\[
\on{dist}(c_n, c_{n+1}) \lesssim \la_n^{-1} + \la_n^{\be/2}\,,
\]
and since $\sum_{n} \la_n^{-1} < \infty$ and $\sum_{n} \la_n^{\be/2} < \infty$, it follows that $(c_n)_{n \in \mb N}$ is a Cauchy sequence and 
\[
\ell_{c_n} \gtrsim r_n\la_n = \la_n^{1+\al} \to \infty\,.
\]

{\bfseries Case (2).}
Using the same bounds on $\ell_c$, we have the estimates
\begin{align*}
a_0(\ell_c) &\lesssim r_n^{-3}\la_n^{-3}\,, &
a_2(\ell_c) &\lesssim r_n^{p}\la_n^{p}\,,
\end{align*}
with $p>1$. We choose $r_n = \la_n^\al$ and get the same estimate on the geodesic distance as before,
\[
\on{dist}(c_n, \tilde c_n)^2 \lesssim \la_n^{-2} + \la_n^{\al(p-1) + p+9}\,.
\]
Choosing $\al$ to satisfy
\begin{equation*}
\al < - \frac{p+9}{p-1} < -1\,,
\end{equation*}
 gives
\begin{equation*}
\on{dist}(c_n,\tilde{c}_n)^2 \lesssim \lambda_n^{-2} + \lambda_n^\be\,,
\end{equation*}
with $\be = \al(p-1) + p+9 <0$, and the same estimate holds for $\on{dist}(\tilde c_n, c_{n+1})$. Therefore
\[
\on{dist}(c_n, c_{n+1}) \lesssim \la_n^{-1} + \la_n^{\be/2}\,,
\]
and as before it follows that $(c_n)_{n \in \mb N}$ is a Cauchy sequence and 
\[
\ell_{c_n} \lesssim \la_n^{1+\al} \to 0\,.
\]
\end{proof}

\section{Open questions}

Several open questions remain that we were not able to settle.

\begin{openquestion}
Are the metrics in Proposition~\ref{prop:counterexample} geodesically incomplete?
\end{openquestion}

Let $G$ be one of the metrics from Proposition~\ref{prop:counterexample}. It was shown in Lemma~\ref{lem:smoothMetric} that $G$ is a smooth, strong Riemannian metric on $\mc I^2(S^1,\R^2)$ and thus geodesics with given initial conditions exist for some time. In Proposition~\ref{prop:counterexample} we showed that $G$ is metrically incomplete. In finite dimensions the theorem of Hopf--Rinow states that a Riemannian manifold is metrically complete if and only if it is geodesically complete. In infinite dimensions Atkin~\cite{Atkin1997} has constructed a geodesically complete manifold that is metrically incomplete. Whether the geodesics of $G$ exist for all time remains an open question.

\begin{openquestion}
Are the conditions~\eqref{eq:H0} and~\eqref{eq:Hinfty} necessary for metric completeness?
\end{openquestion}

We showed in Theorem~\ref{thm:main} that the conditions~\eqref{eq:H0} and~\eqref{eq:Hinfty} are sufficient for metric completeness and in Proposition~\ref{prop:counterexample} we constructed some metrically incomplete length-weighted that fail to satisfy these conditions. Whether all length-weighted Sobolev metrics of order $n\geq 2$ that fail both~\eqref{eq:H0} and~\eqref{eq:Hinfty} are metrically incomplete is unknown to us.

\printbibliography

\end{document}